\definecolor{labelkey}{rgb}{0,0.08,0.45}
\definecolor{refkey}{rgb}{0,0.6,0.0}
\newcommand{\Tau}{\mathrm{T}}
\newcommand{\menge}[2]{\big\{{#1}~\big |~{#2}\big\}}
\newcommand{\fenv}[1]%
{\ensuremath{\,\overrightarrow{\operatorname{env}}_{#1}}}
\newcommand{\benv}[1]%
{\ensuremath{\,\overleftarrow{\operatorname{env}}_{#1}}}
\newcommand{\emp}{\ensuremath{\varnothing}}
\newcommand{\scal}[2]{\left\langle{#1},{#2}  \right\rangle}
\newcommand{\HH}{\ensuremath{\mathcal H}}
\newcommand{\RR}{\ensuremath{\mathbb R}}
\newcommand{\zer}{\ensuremath{\operatorname{zer}}}
\newcommand{\NN}{\ensuremath{\mathbb N}}
\newcommand{\dom}{\ensuremath{\operatorname{dom}}}
\newcommand{\gr}{\ensuremath{\operatorname{Gph}}}
\newtheorem{theorem}{Theorem}[section]
\newtheorem{lemma}[theorem]{Lemma}
\newtheorem{corollary}[theorem]{Corollary}
\newtheorem{proposition}[theorem]{Proposition}
\newtheorem{definition}[theorem]{Definition}
\theoremstyle{plain}{\theorembodyfont{\rmfamily}
}
\theoremstyle{plain}{\theorembodyfont{\rmfamily}
}
\theoremstyle{plain}{\theorembodyfont{\rmfamily}
}
\theoremstyle{plain}{\theorembodyfont{\rmfamily}
}
\theoremstyle{plain}{\theorembodyfont{\rmfamily}
\newtheorem{remark}[theorem]{Remark}}
\def\doi{DOI}
\newcounter{count}
\newcommand{\la}{{\langle}}
\newcommand{\ra}{{\rangle}}
\begin{document}

\title{\textrm{On the Weak and Strong Convergence of a Conceptual Algorithm for Solving Three Operator Monotone Inclusions}}
\author{
Yunier\ Bello-Cruz\thanks{Department of Mathematical Sciences, Northern Illinois University. Watson Hall 366, DeKalb, IL, USA - 60115. E-mail:
\texttt{yunierbello@niu.edu.}}\and Oday Hazaimah\thanks{Department of Mathematical Sciences, Northern Illinois University. Watson Hall 366, DeKalb, IL, USA - 60115. E-mail:
\texttt{oday@niu.edu.} }}

\date{\today}

\maketitle \thispagestyle{fancy}

\begin{abstract} \noindent In this paper, a conceptual algorithm modifying the forward-backward-half-forward (FBHF) splitting method for solving three operator monotone inclusion problems is investigated. The FBHF splitting method adjusts and improves Tseng's forward-backward-forward (FBF) splitting method when the inclusion problem has a third-part operator that is cocoercive. The FBHF method recovers the FBF iteration (when this aforementioned part is zero), and it also works without using the widely used Lipschitz continuity assumption.   
The conceptual algorithm proposed in this paper also has those advantages, and it derives two variants (called Method 1 and Method 2) by choosing different projection (forward) steps. Both proposed methods also work efficiently without assuming the Lipschitz continuity and without directly using the cocoercive constant. Moreover, they have the following desired features:  (i) very general iterations are derived for both methods, recovering the FBF and the FBHF iterations and allowing possibly larger stepsizes if the projection steps are over-relaxing; and (ii) strong convergence to the best approximation solution of the problem is proved for Method 2. To the best of our knowledge, this is the first time that an FBF-type method converges strongly for finding the best approximation solution of the three operator monotone inclusion.     
\end{abstract}

{\small \noindent {\bfseries 2010 Mathematics Subject
Classification:} {47H05, 47J22, 49J52, 65K15, 90C25 }}

\noindent {\bfseries Keywords:} Best approximation solutions, Forward-backward-forward splitting method,  Operator splitting algorithms, Separating hyperplanes, Strong convergence.

\section{Introduction} \label{sec:int}
In this work, we present a conceptual algorithm for solving monotone inclusion problems involving the sum of three maximal monotone operators in a real Hilbert space $\HH$. The general formulation of the {\em inclusion problem} is as follows:
\begin{equation}\label{A+B}
\mbox{Find} \ \quad x\in\HH  \ \quad  \mbox{such that} \ \ 0\in (A+B)x,
\end{equation}
where $A:\HH\to\HH$ (single-valued) and $B:\dom(B)\subseteq\HH\rightrightarrows\HH$ (set-valued) are maximal monotone operators. Throughout this paper we assume that the solution set of this problem, $\zer(A+B)$, is non-empty. Problem \eqref{A+B} appears in different fields of applied mathematics
and optimization including signal processing, numerous important structured optimization, composite convex optimization, saddle point, and inverse problems; see, for instance,  \cite{IApp,kor}. 
One of the most  relevant setting that can be represented by a particular case of the inclusion problem \eqref{A+B} is the broadly-studied {\em variational inequality problem} (VIP)  \begin{equation}\label{VIP}\la Ax, y-x \ra\ge 0 \quad\text{for all}\quad y\in C.\end{equation} The set $C$ is a convex and closed subset of $\HH$ and $\HH$ is equipped with the inner product  $\la\cdot,\cdot\ra$ and the induced Euclidean norm $\|\cdot\|$.  This problem is a particular case of problem  \eqref{A+B} by taking $B=\mathcal{N}_C$ the normal cone of $C$, i.e., find $x\in C$ such that $0\in Ax+\mathcal{N}_C(x)$. A popular strategy to solve problem \eqref{VIP} is the so-called cutting plane (a.k.a. localization) idea which is based on finding a suitable hyperplane that separates the solution of the problem from the current iterate and then performs a metric projection step. This kind of idea is used by the famous {\em Extragradient method} and its variants for solving problem \eqref{VIP}; see, for instance, \cite{BI,kor,pang}. 

Here we apply the cutting plane idea to perform the first phase of the proposed iteration, described below when the single value operator $A$ inside of problem \eqref{A+B} is the sum of two parts. The considered iteration solves problem \eqref{A+B} when $A=A_1+A_2$ such that $A_1$ is cocoercive and $A_2$ is maximally monotone. Moreover, it uses a novel backtracking procedure that allows larger stepsizes in general. Furthermore, the {\em forward} steps are special projection steps onto suitable halfspaces which only evaluate $A_2$. It is worth noting that this kind of modification was presented in \cite{yun-reinier-1} to improve Tseng's scheme in finite dimension for solving problem \eqref{A+B} (without considering the cocoercive part). In general, the scheme in  \cite{yun-reinier-1} fails to keep the splitting structure of Tseng's splitting method and requires finding a uniformly bounded sequence in the image of the set-valued operator.  A similar approach (using normals vectors) for solving problem \eqref{VIP} was presented in \cite{yuni-rein-phan}.

\subsection{Splitting Iterations Description}

We focus our attention on a class of schemes, called splitting methods, which only use each operator individually rather than evaluating their sum directly. We refer to {\em forward step} when the single-valued operator is evaluated, and {\em backward step} when the resolvent operator of the set-valued operator is computed. Recalling that the {\em resolvent operator} of $B$ is the full domain single-valued operator in $\HH$ given by $J_{B}:=(I+B)^{-1}$ where $I\colon\HH\to\HH$ denotes the identity operator.

One of the most important classical splitting methods to find a zero of the sum $A+B$ is the so-called {\em forward-backward} (FB) splitting method introduced in \cite{D-R} which is given as follows:
\begin{equation}\label{F-B}x^{k+1}:=J_{\alpha_k B}(x^k-\alpha_k Ax^k), \end{equation}
where $\alpha_k >0$ for all $k\in\NN$. This iteration converges weakly to a point in the solution of problem \eqref{A+B}, $\zer(A+B)$, when the inverse of $A$ is $\beta$-strongly monotone (or equivalently $A$ being $\beta$-cocoercive), i.e., 
$$\forall x,y\in \HH,\qquad \langle Ax-Ay,x-y\rangle\ge \beta\|Ax-Ay\|^2,$$
where $\alpha_k \le \beta$ for all $k\in\NN$ and $\liminf_{k\to\infty}\alpha_k>0$; see, for instance, \cite{Lions1979,Passty}. Unfortunately, there is no chance to relax the cocoercivity assumption on $A$ to plain monotonicity and still prove convergence for the FB iteration given in \eqref{F-B}. For example, if we set $A$ as the  $\pi/2$-rotation operator which is monotone and $B=0$, iteration \eqref{F-B} moves away from zero (the unique solution) for any positive stepsize and starting at any point. Moreover, iteration \eqref{F-B} converges only weakly and the strong convergence could fail in general; see \cite{guler}. It is worth emphasizing that the cocoercivity assumption of an operator is a strictly stronger property than Lipschitz continuity. Recalling that, for some $L\geq 0$, $A$ is $L$-Lipschitz if $$\forall x,y\in\HH,\qquad\|Ax-Ay\|\le L\|x-y\|.$$ Note that $\beta$-cocoercive operators are monotone and $1/\beta$-Lipschitz continuous, but the converse does not hold in general, i.e., There exist monotone and Lipschitz continuous operators that are not cocoercive.  
Although, for gradients of lower semicontinuous, proper and convex functions, the cocoercivity is equivalent to the global Lipschitz continuity assumption. This nice and surprising fact is strongly used in the convergence analysis of the FB iteration \eqref{F-B} for solving the sum of two convex function (problem \eqref{A+B} with $A=\nabla f$ and $B=\partial g$), is known as
the {\em Baillon-Haddad Theorem}; see Corollary $18.16$ of \cite{HP}. Another useful feature that {\em Baillon-Haddad Theorem} assures is that $\nabla f$ is firmly nonexpansive if and only if it is a nonexpansive map. Recalling that an operator $A$ is non expansive if it is Lipschitz with constant $1$, and $A$ is said to be firmly nonexpansiveness if $$\forall x,y\in\HH,\qquad\|Ax-Ay\|^2\le \|x-y\|^2-\|(x-Ax)-(y-Ay)\|^2.$$ 
In order to relax the cocoercivity assumption, Tseng \cite{tseng} proposed a modification of the FB splitting method, known as the {\em forward-backward-forward} (FBF) splitting method, which usually requires the  $L$-Lipschitz continuity assumption of $A$ and an additional forward step. The FBF splitting iteration is:
\begin{align}
\bar x^k:=&J_{\alpha_k B}(x^k-\alpha_k Ax^k)\label{eqj}\\ x^{k+1}:=&\bar x^k-\alpha_k\big[A\bar x^k-Ax^k\big].\label{eq3}
\end{align} This iteration converges weakly to a point in $\zer(A+B)$, if: \begin{description}\item[(i)] the operator $A$ is monotone and $L$-Lipschitz and $L$ is available by taking $\alpha_k\le 1/L$ for all $k\in\NN$ and $\liminf_{k\to\infty}\alpha_k>0$; or \item[(ii)] the operator $A$ is locally uniformly continuous on $\dom(B)$ and the function $x \mapsto \min_{w\in (A+B)x}\|w\|$ is locally bounded on $\dom(B)$ by choosing $\alpha_k$ to be the largest $\alpha\in\{\sigma,\sigma\theta, \sigma\theta^2,\ldots\}$ with $\sigma>0$ and $\theta,\delta\in]0,1[$ satisfying 
\begin{equation}\label{TsengL}\alpha\|A\bar x^k-Ax^k\|\le \delta\|x^k-\bar x^k\|.\end{equation} \end{description}It is worth noting that there are relatively few effective alternatives to Tseng's FBF algorithm \eqref{eqj}-\eqref{eq3} for solving inclusions in the form of problem \eqref{A+B} \cite{Lions1979,Zhu-Marcotte-SIAM-1996,pontus}.

In this paper, we assume that the single-valued operator $A$ can be split as the sum of $A_1$ ($\beta$-cocoercive operator) and $A_2$ (monotone operator). Hence, problem \eqref{A+B} takes the following form:
\begin{equation}\label{A1+A2+B}
\mbox{Find} \ \quad x\in\HH  \ \quad  \mbox{such that} \ \ 0\in (A_1+A_2+B)x.
\end{equation}
For convenience,  we also denote $\zer(A_1+A_2+B)$ the solution set of this problem, which from now on is assumed to be nonempty.   Moreover, the FBF iteration \eqref{eqj}-\eqref{eq3} can be modified to the following {\em forward-backward-half-forward} (FBHF) splitting iteration proposed in \cite{LuisDavis} as follows:
\begin{align}
\bar x^k:=&J_{\alpha_k B}(x^k-\alpha_k Ax^k)\label{eqjh}\\ x^{k+1}:=&\bar x^k-\alpha_k\big[A_2\bar x^k-A_2x^k\big].\label{eq3h}
\end{align} The weak convergence occurs  when: 
\begin{description}\item[(i)]  the operator $A_1$ is $\beta$-cocoercive, $A_2$ is $L$-Lipschitz and $\beta$ and $L$ are available by taking  $\alpha_k \in \left]\eta, \min\{\beta,1/(2L)\}\right[$ with $\eta>0$ for all $k\in\NN$; or\item[(ii)] the operator $A_1$ is $\beta$-cocoercive and $\beta$ is available and $A_2$ is uniformly continuous in weakly compact subset of $\dom (B)$  by choosing $\alpha_k$ as the largest $\alpha\in \{2\beta\epsilon\theta,2\beta\epsilon\theta^2,\ldots\}$ with $\epsilon,\theta\in]0,1[$  satisfying \eqref{TsengL} with $\delta\in]0,\sqrt{1-\epsilon}[$ and  $A=A_2$.\end{description}
It is worth mentioning that this last backtracking strategy, described in (ii), to find $\alpha_k$ uses $\beta$ and allows smaller stepsizes than $\beta$. The main difference between iterations \eqref{eqj}-\eqref{eq3} and \eqref{eqjh}-\eqref{eq3h} yields in the last forward steps \eqref{eq3} and \eqref{eq3h}, i.e., the operator $A_1$ is not evaluated in \eqref{eq3h} (remind that $A=A_1+A_2$). This is possible because of the cocoercive assumption, which allows the one-step FB splitting iteration \eqref{F-B} to be convergent for certain values of $\alpha_k$ related to $\beta$. Actually, if $A_2=0$, the FB splitting iteration \eqref{F-B} is recovered by the FBHF splitting iteration \eqref{eqjh}-\eqref{eq3h}. In this context, with $A_1=\nabla f$ and $B=\partial g$ with $f,g$ convex functions, was proposed by using the backtracking procedure \eqref{TsengL} a weakly convergent proximal gradient method in \cite{yun-nghia} without any boundedness of the image of $\partial g$ or Lipschitz continuity assumption of $\nabla f$. Note further that, in the particular case that $A_1=0$, the problem \eqref{A1+A2+B} (with $A=A_2$) becomes problem \eqref{A+B} and the FBHF iteration \eqref{eqjh}-\eqref{eq3h} coincides with the FBF iteration \eqref{eqj}-\eqref{eq3}.
Motivations and applications for such kind of splitting structure giving in problem \eqref{A1+A2+B} can be found in \cite{LuisDavis,HP}. For example, Brice\~no-Arias and Davis in \cite{LuisDavis} applied the algorithm to primal-dual composite monotone inclusions with non-self-adjoint linear operators. In nonsmooth empirical risk minimizations in machine learning, one minimizes finite approximations of expected value for the loss function and constraints. For more, we refer the reader to \cite{LuisDavis} and the references therein. 

The proposed conceptual algorithm here modifies and extends the FBHF iteration  \eqref{eqjh}-\eqref{eq3h} by using two phases: (I) in the spirit of cutting plane methods, a backtracking search is performed to construct a suitable separating hyperplane; and (II) two special projection (forward) steps onto suitable separating hyperplanes deliver two different methods. Convergence analysis of both methods is
presented without the Lipschitz continuity assumption and the knowledge of the cocoercive constant of $A_1$. In addition, if $A_2$ is $L_2$-Lipschitz, the proposed backtracking strategy may allow larger stepsizes than $1/L_2$. Furthermore, the first variant relies on a very general iteration which recovers the FBHF iteration  \eqref{eqjh}-\eqref{eq3h} as a special case.  
The second variant has the following
desirable properties: (i) the generated sequence is entirely contained in a ball with a diameter equal to the distance between the initial and the solution set; and (ii) the whole sequence converges strongly to the solution of the problem that lies closest
to the initial point. Emphasizing that only weak convergence is known for the FBHF splitting method.

The presentation of this paper is as follows. In the next section, we provide some relevant background and useful facts that will be used throughout this paper. The proposed conceptual algorithm is presented in Section \ref{section3} and its two versions, called {\bf Method 1} and {\bf Method 2} are described. Section \ref{section4} contains the convergence analysis of these methods. Section \ref{section5} gives some concluding remarks.

\section{Preliminaries}
In this section, we present some definitions and conventional results needed for the convergence analysis of the proposed methods. The notation and results we discuss are standard, and interested readers can find further information in \cite{HP}. 

Throughout this paper, we write $p:=q$ to indicate that $p$ is defined to be equal to $q$. Let $\HH$ be a real Hilbert space equipped with inner product $\la \cdot , \cdot \ra$ and induced norm $\|\cdot\|:=\sqrt{\la\cdot,\cdot\ra}$. We
write $\NN$ for the nonnegative integers $\{0, 1, 2,\ldots\}$. The closed ball centered at $x\in\HH$ with radius $\gamma>0$ will be denoted by $\mathbb{B}[x;\gamma]:=\menge{y\in\HH }{ \|y-x\|\leq\gamma}$. 
Let $T:\HH\rightrightarrows\HH$ be a set-valued operator and its domain be denoted by $\dom(T):=\menge{x\in\HH}{T(x)\neq \emp}$ and, 
for simplicity, we usually write $Tx := T(x)$.  Define the graph of $T$ by $\gr(T):=\menge{(x,u)\in\dom(T)\times\HH}{u\in Tx}$. 
We say that $T$ is monotone if 
\begin{equation*}\label{oioi}\forall (x,u), (y,v)\in\gr(T),
\qquad\scal{x-y}{u-v}\geq 0,
\end{equation*}
and it is maximally monotone if there exists no monotone operator $T^{\prime}$ such that $\gr(T^{\prime})$ properly contains $\gr(T)$.

In the following, we state some important facts and preliminary results on set-valued mappings that are maximally monotone and addresses their graphs properties. 
\begin{lemma}[Proposition 20.31 and Proposition 20.33 of \cite{HP}] \label{bound}
Let $T:\HH\rightrightarrows\HH$ be a maximal monotone operator and let $x\in\HH$. Then the following hold: 
\item[ {\bf(i)}] $Tx$ is closed and convex;

\item[ {\bf(ii)}] For every sequence $(x^k,u^k)_{k\in\NN}\subset \gr(T)$ and every point $(x,u)\in \dom(T)\times\HH$, if $x^k\rightharpoonup x$ and $u^k\to u$, then $(x,u)\in \gr(T)$, i.e. $\gr(T)$ is sequentially closed in the weak-strong topology;
\item[ {\bf(iii)}] $\gr(T)$ is closed in $\HH\times\HH$ in the strong topology.
\end{lemma}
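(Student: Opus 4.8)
The plan is to reduce all three assertions to the \emph{maximality characterization} of the graph. Since $T$ is maximally monotone, a pair $(x,u)\in\HH\times\HH$ lies in $\gr(T)$ if and only if it is monotonically related to every element of the graph, i.e.
\[
(x,u)\in\gr(T)\quad\Longleftrightarrow\quad \scal{x-y}{u-v}\ge 0\ \text{ for all }(y,v)\in\gr(T).
\]
The forward implication is just monotonicity, and the reverse is precisely the statement that $\gr(T)$ cannot be enlarged without destroying monotonicity, which is maximality. With this equivalence available, each part becomes an elementary topological manipulation.

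For (i), I would fix $x\in\HH$ and freeze the first coordinate in the characterization above, writing
\[
Tx=\bigcap_{(y,v)\in\gr(T)}\menge{u\in\HH}{\scal{x-y}{u-v}\ge 0}.
\]
Each set in the intersection is a closed half-space (or all of $\HH$, in the case $x=y$), hence closed and convex; an arbitrary intersection of closed convex sets is again closed and convex, so $Tx$ is closed and convex.

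For (ii), I would fix an arbitrary $(y,v)\in\gr(T)$ and apply monotonicity to the pairs $(x^k,u^k)$ and $(y,v)$, obtaining $\scal{x^k-y}{u^k-v}\ge 0$ for every $k$. The key is then the decomposition
\[
\scal{x^k-y}{u^k-v}=\scal{x^k-y}{u^k-u}+\scal{x^k-y}{u-v},
\]
passed to the limit term by term: since $x^k-y\weakly x-y$, pairing against the fixed vector $u-v$ gives $\scal{x^k-y}{u-v}\to\scal{x-y}{u-v}$, while $(x^k)_\nnn$ is norm-bounded and $u^k-u\to 0$ strongly, so Cauchy--Schwarz yields $\lvert\scal{x^k-y}{u^k-u}\rvert\le\|x^k-y\|\,\|u^k-u\|\to 0$. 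Hence $\scal{x-y}{u-v}\ge 0$, and as $(y,v)$ was arbitrary the maximality characterization delivers $(x,u)\in\gr(T)$. Part (iii) is then an immediate corollary: strong convergence $x^k\to x$ entails $x^k\weakly x$, so the same computation applies; in fact it produces $(x,u)\in\gr(T)$, and therefore $x\in\dom(T)$, with no need to assume membership in the domain beforehand.

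The one genuinely delicate point, and the step I expect to require the most care, is the mixed weak/strong limit in (ii): the expression $\scal{x^k-y}{u^k-v}$ pairs a merely weakly convergent sequence against a varying sequence, and weak convergence alone does not pass through such bilinear terms. The decomposition above is exactly what rescues the argument, and it succeeds only because the strong convergence of $(u^k)_\nnn$ is matched with the norm-boundedness of the weakly convergent $(x^k)_\nnn$ (a consequence of the uniform boundedness principle). Everything else is routine bookkeeping resting on the maximality equivalence.
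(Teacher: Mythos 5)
Your proof is correct. The paper does not prove this lemma at all---it is imported verbatim, with a citation, from Propositions 20.31 and 20.33 of the Bauschke--Combettes reference \cite{HP}---and your argument (reduce everything to the maximality characterization of $\gr(T)$, write $Tx$ as an intersection of closed half-spaces, and handle the mixed weak/strong limit in (ii) by splitting $\scal{x^k-y}{u^k-v}$ and using norm-boundedness of the weakly convergent sequence) is precisely the standard proof given there.
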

Note that the graph of a maximal monotone operator, in general, need not be sequentially closed in the weak topology of  $\HH\times\HH.$
\begin{proposition}[Theorem $4$ of \cite{minty}]\label{inversa}
Let $T:\dom(T)\subseteq\HH\rightrightarrows\HH$ be a set-valued and maximal monotone operator. If $\alpha >0$ then the resolvent operator $J_{\alpha T}:=(I+\alpha\, T)^{-1}:\HH\rightarrow \dom(T)$ is a full domain, single-valued and firmly nonexpansive operator, i.e.,  $$\forall x,y\in\HH, \qquad \| J_{\alpha T}(x)-J_{\alpha T}(y)\|^2+\|(I-J_{\alpha T})(x)-(I-J_{\alpha T})(y)\|^2\le\| x-y\|^2.$$
\end{proposition}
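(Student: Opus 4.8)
The plan is to reduce to the case $\alpha=1$ and then establish the three assertions separately: single-valuedness, the firm nonexpansiveness inequality, and full domain. The reduction is immediate, since $\alpha T$ is again maximal monotone whenever $\alpha>0$ and $J_{\alpha T}=(\Id+\alpha T)^{-1}$; I therefore write $S:=\alpha T$ and argue for $J_S=(\Id+S)^{-1}$. The conceptual point I would keep in view is that single-valuedness and the norm inequality use \emph{only} monotonicity, whereas full domain (surjectivity of $\Id+S$) is exactly where \emph{maximality} is indispensable. This surjectivity is the genuine content of Minty's theorem and the step I expect to be the main obstacle.

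For single-valuedness and firm nonexpansiveness, suppose $x_i\in J_S(u_i)$ for $i\in\{1,2\}$, which by definition means $u_i-x_i\in Sx_i$. Monotonicity of $S$ applied to the pairs $(x_1,u_1-x_1)$ and $(x_2,u_2-x_2)$ gives $\scal{(u_1-x_1)-(u_2-x_2)}{x_1-x_2}\ge 0$, which rearranges to $\scal{u_1-u_2}{x_1-x_2}\ge\|x_1-x_2\|^2$. Taking $u_1=u_2$ forces $x_1=x_2$, so $J_S$ is single-valued. For the inequality, set $a:=x_1-x_2$ and $b:=(u_1-x_1)-(u_2-x_2)$, so that $u_1-u_2=a+b$ and $\scal{a}{b}\ge 0$ by monotonicity; then $\|u_1-u_2\|^2=\|a\|^2+2\scal{a}{b}+\|b\|^2\ge\|a\|^2+\|b\|^2$ is precisely the claimed firm nonexpansiveness, since $a=J_Su_1-J_Su_2$ and $b=(\Id-J_S)u_1-(\Id-J_S)u_2$.

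The hard part is $\ran(\Id+S)=\HH$. My preferred route is via the reflected resolvent: on $D:=\ran(\Id+S)$ the map $J_S$ is single-valued and firmly nonexpansive, which is equivalent to the reflection $2J_S-\Id\colon D\to\HH$ being nonexpansive. Should $D\neq\HH$, I would extend this nonexpansive map to a nonexpansive $\widetilde R\colon\HH\to\HH$ by the Kirszbraun--Valentine extension theorem, then convert back by setting $\widetilde J:=\tfrac12(\Id+\widetilde R)$ and reading off the monotone operator $\widetilde S$ with graph $\{(\widetilde Jw,\,w-\widetilde Jw):w\in\HH\}$; for any $w_0\in\HH\setminus D$ this graph contains a pair not in $\gr S$, so $\gr S$ is properly enlarged while remaining monotone, contradicting maximality. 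A more hands-on alternative is to show $D$ is closed — a Cauchy sequence in $D$ pulls back, through the estimate $\scal{u_1-u_2}{x_1-x_2}\ge\|x_1-x_2\|^2$, to a Cauchy sequence of preimages whose limit lands in $\gr S$ by the strong closedness in Lemma~\ref{bound}(iii) — and convex, and then to rule out a proper closed convex $D$ by separating an exterior point $z$ from $D$: writing its projection in graph form as $p=x_0+y_0$ and $d:=z-p$, the midpoint pair $(x_0+\tfrac12 d,\,y_0+\tfrac12 d)$ turns out to be monotonically compatible with all of $\gr S$ (the cross terms combine with the supporting-hyperplane inequality to leave $\tfrac14\|d\|^2>0$), again enlarging $\gr S$ and contradicting maximality. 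Either way the crux — and the only place maximality is used — is this surjectivity; the convexity of $D$ needed for the second route is itself the nontrivial input (it follows from $\overline{\ran}$ of a maximal monotone operator being convex, together with closedness of $D$), which is why I would lead with the extension argument.
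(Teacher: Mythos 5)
This proposition is not proved in the paper at all: it is imported as a known fact with a citation to Minty's theorem, so there is no in-paper argument to compare yours against. Taken on its own terms, your proof is correct and follows the standard modern route. The reduction to $\alpha=1$, the inequality $\scal{u_1-u_2}{x_1-x_2}\ge\|x_1-x_2\|^2$ obtained from plain monotonicity, and the resulting single-valuedness and firm nonexpansiveness are all exactly right, and you correctly isolate surjectivity of $I+S$ as the only point where maximality (rather than mere monotonicity) is used. Your primary argument for surjectivity --- the reflected resolvent $2J_S-I$ is nonexpansive on $D=\ran(I+S)$, extends nonexpansively to all of $\HH$ by Kirszbraun--Valentine, and the resulting firmly nonexpansive $\widetilde J$ yields a monotone graph $\menge{(\widetilde J w,\,w-\widetilde J w)}{w\in\HH}$ that contains $\gr(S)$ (every $(x,u)\in\gr(S)$ arises from $w=x+u\in D$ with $\widetilde Jw=J_Sw=x$) and contains it properly whenever $D\neq\HH$ --- is sound and contradicts maximality exactly as claimed. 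The only caution concerns your fallback route: the convexity of the closure of the range of a maximal monotone operator is itself usually deduced from Minty's theorem, so that alternative risks circularity unless convexity of $D$ is established independently; you flag this yourself and rightly lead with the extension argument.
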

The inverse of $T$ is the set-valued operator defined by $T^{-1}\colon u \mapsto \menge{x\in\HH}{ u\in T(x)}$.  The zero set of $T$ is $\zer(T) := T^{-1}(0)$. If $T=A+B$ then the solution of problem \eqref{A+B} is $$\zer(A+B)=(A+B)^{-1}(0)=\menge{x\in\HH}{0\in (A+B)x}.$$ The next result characterizes the above solution set as the fixed points of the forward-backward operator.  
\begin{proposition}[Proposition 23.28 of \cite{HP}]\label{parada}
Let $\alpha>0$, and $A:\HH\to\HH$ and $B:\dom(B)\subseteq\HH\rightrightarrows\HH$ be two maximal monotone operators. Then, $$x=(I+\alpha B)^{-1}(x-\alpha Ax)\quad\text{if and only if}\quad x\in \zer(A+B).$$
\end{proposition}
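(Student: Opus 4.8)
The plan is to prove the claim by unwinding the definition of the resolvent into a single chain of equivalences, so that both implications are handled at once. The only structural input needed is that $(I+\alpha B)^{-1}$ is, by Proposition \ref{inversa}, a genuine full-domain single-valued operator. This guarantees on the one hand that the equation $x=(I+\alpha B)^{-1}(x-\alpha Ax)$ is meaningful (the right-hand side is defined at every point of $\HH$, in particular at $x-\alpha Ax$, since $A$ has full domain $\HH$), and on the other hand that the resolvent relation coincides with the set-theoretic inverse of $I+\alpha B$, so that passing to the inverse is an exact equivalence rather than a one-sided inclusion.

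First I would rewrite the fixed-point equation using the definition of the inverse operator. Setting $y:=x-\alpha Ax$, the equality $x=(I+\alpha B)^{-1}(y)$ is equivalent to $y\in(I+\alpha B)(x)$, by the definition of $(I+\alpha B)^{-1}$ as a relation together with the single-valuedness supplied by Proposition \ref{inversa}. Next, since the identity operator is single-valued while $B$ is set-valued, one has $(I+\alpha B)(x)=x+\alpha Bx$, and hence the membership $y\in(I+\alpha B)(x)$ reads $x-\alpha Ax\in x+\alpha Bx$.

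From this point the argument is purely algebraic. Subtracting $x$ from both sides gives $-\alpha Ax\in\alpha Bx$, and dividing by $\alpha>0$ yields $-Ax\in Bx$, which is equivalent to $0\in Ax+Bx=(A+B)x$, that is, $x\in\zer(A+B)$. Since each step in this chain is an equivalence, reading it backwards delivers the converse implication, and the asserted characterization follows.

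There is no real obstacle here: the statement is a direct definitional verification. The only two points that require care are the appeal to Proposition \ref{inversa} to ensure that the resolvent is everywhere defined and single-valued, so that the fixed-point equation and its inverse form are literally equivalent, and the use of the hypothesis $\alpha>0$ when cancelling the factor $\alpha$, without which the final division would not be justified.
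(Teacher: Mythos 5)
Your proof is correct. The paper itself offers no proof of this proposition --- it is imported verbatim as Proposition 23.28 of \cite{HP} --- so there is nothing to diverge from; your definitional unwinding (resolvent equation $\Leftrightarrow$ $x-\alpha Ax\in x+\alpha Bx$ $\Leftrightarrow$ $0\in(A+B)x$, using $\alpha>0$ and the single-valuedness from Proposition \ref{inversa}) is exactly the standard argument, and it is the same manipulation the paper records immediately afterwards in the more general form \eqref{fermat}.
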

Note further that, for all $x,y\in\HH$,  \begin{equation}\label{fermat}y=(I+\alpha B)^{-1}(x-\alpha Ax)\; \quad\text{if and only if}\quad  \frac{x-y}{\alpha}-Ax\in By.\end{equation} 
Let $C$ be a nonempty, convex and closed subset of $\HH$, and define the normal cone operator with respect to a nonempty closed convex set $C\subseteq \HH$ as
\[
\mathcal{N}_C(x):=\left\{ \begin{array}{ll}
\emp, & \text{if}\; x \not\in C\\
\menge{ y\in\HH}{\scal{y}{z-x}\leq 0,~~\forall z\in C}, &  \text{if}\;  x \in C.
\end{array}\right.
\] Hence, the orthogonal projection of $x$ onto $C$, $\Pi_C(x)$, is given by $\Pi_C(x)=J_{\mathcal{N}_C}(x)=(I+\mathcal{N}_C)^{-1}(x)$. 
Now, we state two well-known facts on orthogonal projections.
\begin{proposition}[Theorem 3.16 and Proposition 4.16 of \cite{HP}]\label{proj}
Let $C$ be nonempty closed convex subset of $\HH$, and $\Pi_C$ be the orthogonal projection onto $C$. For all $x,y\in \HH$ and all $z\in C$ the following hold:
\item[ {\bf(i)}] $ \|\Pi_{C}(x)-\Pi_{C}(y)\|^2 \leq \|x-y\|^2-\|(x-\Pi_{C}(x))-\big(y-\Pi_{C}(y)\big)\|^2;$
\item[ {\bf(ii)}] $\la x-\Pi_C(x),z-\Pi_C(x)\ra \leq 0.$
\end{proposition}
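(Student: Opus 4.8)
The plan is to treat part \textbf{(ii)} as the fundamental variational characterization of the projection and then obtain part \textbf{(i)} as a consequence. Before anything else I would record that $\Pi_C$ is genuinely single valued and everywhere defined: since $C$ is nonempty, closed and convex, the normal cone $\mathcal{N}_C$ is maximal monotone, so by Proposition \ref{inversa} applied with $T=\mathcal{N}_C$ and $\alpha=1$ the resolvent $J_{\mathcal{N}_C}=\Pi_C$ is a full-domain, single-valued (indeed firmly nonexpansive) map. This legitimizes writing $\Pi_C(x)$ for a uniquely determined point and is really the only step that needs justification.

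For part \textbf{(ii)} I would argue directly from the resolvent identity $\Pi_C=(I+\mathcal{N}_C)^{-1}$. Writing $p:=\Pi_C(x)$, the relation $p=(I+\mathcal{N}_C)^{-1}(x)$ is equivalent to $x-p\in\mathcal{N}_C(p)$, and unwinding the definition of the normal cone recorded just above this reads precisely $\scal{x-p}{z-p}\le 0$ for every $z\in C$, which is exactly \textbf{(ii)}. Equivalently, and without invoking the resolvent, the same inequality follows from the minimization characterization $p=\argmin_{z\in C}\|z-x\|^2$: for a fixed $z\in C$ convexity of $C$ gives $p+t(z-p)\in C$ for $t\in[0,1]$, and differentiating $t\mapsto\|p+t(z-p)-x\|^2$ at $t=0^+$ yields $\scal{p-x}{z-p}\ge 0$.

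For part \textbf{(i)} the shortest route is simply to observe that it \emph{is} the firm nonexpansiveness inequality of Proposition \ref{inversa} for $T=\mathcal{N}_C$ and $\alpha=1$, since $\Pi_C=J_{\mathcal{N}_C}$. Alternatively, to keep the step self-contained, I would apply \textbf{(ii)} twice: with base point $x$ and test point $z=\Pi_C(y)$, and with base point $y$ and test point $z=\Pi_C(x)$. Adding the two resulting inequalities produces $\scal{(x-\Pi_C(x))-(y-\Pi_C(y))}{\Pi_C(x)-\Pi_C(y)}\ge 0$. Expanding $\|x-y\|^2=\|\Pi_C(x)-\Pi_C(y)\|^2+2\scal{\Pi_C(x)-\Pi_C(y)}{(x-\Pi_C(x))-(y-\Pi_C(y))}+\|(x-\Pi_C(x))-(y-\Pi_C(y))\|^2$ and discarding the (nonnegative) cross term then gives \textbf{(i)}.

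I do not expect a genuine obstacle here, since both assertions are classical; the only point deserving care is the well-definedness established at the outset, namely that $\mathcal{N}_C$ is maximal monotone so that Proposition \ref{inversa} applies (equivalently, that the Hilbert projection theorem guarantees existence and uniqueness of the minimizer). Everything after that is a direct translation of the resolvent identity and one routine expansion of a squared norm.
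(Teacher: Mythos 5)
Your proof is correct. The paper does not prove this statement at all --- it is quoted verbatim as a known fact from Bauschke--Combettes (Theorem 3.16 and Proposition 4.16 of \cite{HP}) --- so there is no in-paper argument to compare against; your derivation is the standard textbook one: part (ii) is the variational characterization obtained by unwinding $x-\Pi_C(x)\in\mathcal{N}_C(\Pi_C(x))$ (or by differentiating the distance-squared along a chord), and part (i) follows by applying (ii) twice, adding, and expanding $\|x-y\|^2$ along the orthogonal-type decomposition. The only cosmetic remark is that your ``shortest route'' for (i), namely invoking the firm nonexpansiveness of $J_{\mathcal{N}_C}$ from Proposition \ref{inversa}, leans on another cited fact of the paper rather than proving anything; your self-contained alternative via (ii) is the one that actually constitutes a proof.
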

In the following, we present some useful formulae to describe the iterates of the proposed methods by introducing suitable hyperplanes and orthogonal projections onto these hyperplanes. 
\begin{proposition}[Proposition 28.19 of \cite{HP}]\label{proj_C(x)}
Let $y,v\in\HH$, $r\in\RR$,
$$\Tau^r_{y,v}:=\menge{x\in\HH}{\la v,x-y\ra\leq r},$$ 
and $$\Gamma_{z,x^0}:=\menge{x\in\HH}{\la x^0-z,x-z\ra\leq 0}.$$ 
Then, 
$$\Pi_{\Tau^r_{y,v}}(w)=\left\{\begin{array}{lll}w, &\mbox{if} & w\in\Tau^r_{y,v} \\ w-\displaystyle\frac{\la v,w-y\ra-r\,}{\|v\|^2}\,v,&\mbox{if} & w\notin \Tau^r_{y,v}. \end{array}\right.$$
Moreover, 
\item [ {\bf (i)}] if $x^0-z$ is linearly dependent to $v$ (or equivalently, $\|x^0-z\|\|v\|=\langle x^0-z,v\rangle$), $\Tau^r_{y,v}\subset\Gamma_{z,x^0}$ and $$\Pi_{\Tau^r_{y,v}\cap\Gamma_{z,x^0}}(x^0)=\Pi_{\Tau^r_{y,v}}(x^0).$$
\item [ {\bf (ii)}] if $x^0-z$ is linearly independent to $v$ (or equivalently, $\|x^0-z\|\|v\|>\langle x^0-z,v\rangle$),
$$\Pi_{\Tau^r_{y,v}\cap\Gamma_{z,x^0}}(x^0)=x^0-\lambda_1v-\lambda_2(x^0-z),$$  
where $\lambda_1,\lambda_2$ are solutions of the linear system:
\begin{align*}\lambda_1\|v\|^2+\lambda_2\la v,x^0-z\ra&=\la v,x^0-y\ra-r\\\lambda_1\la v, x^0-z\ra+\lambda_2\|x^0-z\|^2&=\la x^0-z,x^0-z\ra.\end{align*}
\end{proposition}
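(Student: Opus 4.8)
The common engine for every assertion is the variational characterization of the metric projection onto a closed convex set $K$ recorded in Proposition~\ref{proj}(ii): a point $p$ equals $\Pi_K(w)$ exactly when $p\in K$ and $\la w-p,x-p\ra\le 0$ for all $x\in K$. The plan is, in each case, to write down an explicit candidate and verify these two conditions. For the halfspace formula itself, the case $w\in\Tau_{y,v}$ is trivial; if $w\notin\Tau_{y,v}$ I set $\mu:=(\la v,w-y\ra-r)/\|v\|^2>0$ and $p:=w-\mu v$. A one-line computation gives $\la v,p-y\ra=r$, so $p$ sits on the boundary and hence in $\Tau_{y,v}$; since $w-p=\mu v$ and $\la v,x-y\ra\le r=\la v,p-y\ra$ for every $x\in\Tau_{y,v}$, we get $\la w-p,x-p\ra=\mu(\la v,x-y\ra-r)\le 0$, and the characterization identifies $p$ with $\Pi_{\Tau_{y,v}}(w)$.

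For part (i), the equality $\|x^0-z\|\,\|v\|=\la x^0-z,v\ra$ is precisely the equality case of Cauchy--Schwarz, so $x^0-z=tv$ for some $t\ge 0$. Substituting into the definition of $\Gamma_{z,x^0}$ rewrites it, when $t>0$, as $\{x\in\HH\mid \la v,x-z\ra\le 0\}=\{x\in\HH\mid \la v,x-y\ra\le\la v,z-y\ra\}$, a halfspace with the same normal $v$ as $\Tau_{y,v}$ but threshold $\la v,z-y\ra$ (and $\Gamma_{z,x^0}=\HH$ when $t=0$). In the configuration of interest, where $z$ is the separated point lying outside or on the boundary of $\Tau_{y,v}$, one has $\la v,z-y\ra\ge r$, so the threshold of $\Gamma_{z,x^0}$ dominates that of $\Tau_{y,v}$ and therefore $\Tau_{y,v}\subset\Gamma_{z,x^0}$. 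Then $\Tau_{y,v}\cap\Gamma_{z,x^0}=\Tau_{y,v}$ and the identity $\Pi_{\Tau_{y,v}\cap\Gamma_{z,x^0}}(x^0)=\Pi_{\Tau_{y,v}}(x^0)$ follows immediately from the halfspace formula.

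For part (ii), linear independence of $x^0-z$ and $v$ is the strict Cauchy--Schwarz inequality, so the Gram determinant $D:=\|v\|^2\|x^0-z\|^2-\la v,x^0-z\ra^2$ is positive and the displayed $2\times 2$ system is uniquely solvable. The idea is to look for the projection among the points whose residual from $x^0$ lies in the plane spanned by the two outer normals, namely $p=x^0-\lambda_1 v-\lambda_2(x^0-z)$, and to force $p$ onto both bounding hyperplanes $\la v,p-y\ra=r$ and $\la x^0-z,p-z\ra=0$. Expanding these two equalities reproduces exactly the two lines of the stated linear system. To confirm that this $p$ really is $\Pi_{\Tau_{y,v}\cap\Gamma_{z,x^0}}(x^0)$, I would again invoke Proposition~\ref{proj}(ii): since $x^0-p=\lambda_1 v+\lambda_2(x^0-z)$ and every $x\in\Tau_{y,v}\cap\Gamma_{z,x^0}$ satisfies both $\la v,x-p\ra\le 0$ and $\la x^0-z,x-p\ra\le 0$ (because $p$ lies on both boundaries), the obtuse-angle inequality $\la x^0-p,x-p\ra\le 0$ holds as soon as $\lambda_1\ge 0$ and $\lambda_2\ge 0$.

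The hard part is exactly this last sign verification, which is where the geometry genuinely enters. Solving the system by Cramer's rule gives $\lambda_1=\|x^0-z\|^2(\la v,z-y\ra-r)/D$, so $\lambda_1\ge 0$ reduces to the same positional fact $\la v,z-y\ra\ge r$ used in part (i), i.e.\ that $z$ is not interior to the cut $\Tau_{y,v}$; the nonnegativity of $\lambda_2$ likewise depends on the relative positions of $x^0$ and $z$ with respect to the hyperplane built in Phase~(I). Establishing these relations --- equivalently, excluding the degenerate situation in which only one constraint is active at the optimum and the corner formula would be spurious --- is the delicate point, and it is precisely here that the standing hypotheses supplied by the algorithmic construction must be invoked. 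Once the two multipliers are shown to be nonnegative, feasibility $p\in\Tau_{y,v}\cap\Gamma_{z,x^0}$ is automatic from $p$ being on both boundaries, and the characterization closes the argument.
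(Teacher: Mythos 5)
The paper offers no proof of this proposition at all --- it is imported as Proposition 28.19 of \cite{HP} --- so there is no internal argument to compare yours against; your reconstruction has to stand on its own. Its skeleton is the standard one (the obtuse-angle characterization of Proposition \ref{proj}(ii) applied to an explicit candidate), and the computational core is correct: the halfspace formula is fully verified, the two active-constraint equations $\la v,p-y\ra=r$ and $\la x^0-z,p-z\ra=0$ do expand to exactly the stated linear system, and your Cramer's-rule expression $\lambda_1=\|x^0-z\|^2\left(\la v,z-y\ra-r\right)/D$ is right. You have also put your finger on something real: as an abstract statement about arbitrary $y,v,z,x^0,r$, part (i) is false without the positional hypothesis $\la v,z-y\ra\ge r$ (take $z$ in the interior of $\Tau_{y,v}$ and $x^0-z$ a positive multiple of $v$; then every boundary point $x$ of $\Tau_{y,v}$ has $\la v,x-z\ra=r-\la v,z-y\ra>0$ and so escapes $\Gamma_{z,x^0}$), and part (ii) silently assumes both constraints are active at the optimum. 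In the paper's application these facts come from the construction: with $y=\bar x^k$, $z=x^k$, $v=\bar w^k_2$, $r=r_k$, Proposition \ref{H<=>S^*} and \eqref{desig-muy-usada} give $\la \bar w^k_2,x^k-\bar x^k\ra\ge\frac{1-\delta}{\alpha_k}\|x^k-\bar x^k\|^2>r_k$ whenever $x^k\notin\zer(A+B)$, which is precisely $\la v,z-y\ra>r$.

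The one step you genuinely leave open is the sign of $\lambda_2$, and it does not follow from the positional fact above. Cramer's rule gives $\lambda_2=\left(\|v\|^2\|x^0-z\|^2-\la v,x^0-z\ra(\la v,x^0-y\ra-r)\right)/D$, and a short computation shows that the numerator is nonpositive exactly when $\Pi_{\Tau_{y,v}}(x^0)$ already lies in $\Gamma_{z,x^0}$; in that regime only the $\Tau$-constraint is active at the true projection and the corner formula returns the wrong point. So the ``standing hypotheses of the algorithm'' do not automatically rescue the argument: a self-contained proof must either verify $\lambda_2\ge0$ in the situation where formula (ii) is invoked, or add the missing case split (if $\Pi_{\Tau_{y,v}}(x^0)\in\Gamma_{z,x^0}$ the projection onto the intersection is $\Pi_{\Tau_{y,v}}(x^0)$; otherwise the corner formula applies with both multipliers nonnegative). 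This is exactly the finer trichotomy that the cited result in \cite{HP} carries and that the two-case statement here, and the formulas \eqref{l1}--\eqref{l2} built on it, compress away; the caveat therefore applies as much to the proposition as written as to your proof, but your proof is the one claiming to establish it.
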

Now we define an important concept, the so-called Fej\'er monotonicity. 
\begin{definition}
Let $S$ be a nonempty subset of $\HH$. A sequence $(x^k)_{k\in\NN}\subset\HH$ is said to be Fej\'er monotone with respect to $S$, if and only if, for all $x\in S$ there exists $k_0\in\NN$, such that $$\|x^{k+1}-x\|\le\|x^k-x\|\quad \text{for all}\quad k\ge k_0.$$
\end{definition}
Useful properties on Fej\'er monotone sequences are the following.
\begin{proposition}[Proposition 5.4 and Theorem 5.5 of \cite{HP}]\label{punto}
Let $(x^k)_{k\in\NN}$ be a sequence in $\HH$ and let $S$ be a non empty subset of $\HH$. If $(x^k)_{k\in\NN}$ is Fej\'er monotone with respect to $S$, then:
\item[ {\bf(i)}] The sequence $(x^k)_{k\in\NN}$ is bounded;
\item[ {\bf(ii)}] The sequence $\big(\|x^k-x\|\big)_{k\in\NN}$ is convergent for all $x\in S;$
\item[ {\bf(iii)}] If every weak accumulation point $x^*$ of $(x^k)_{k\in\NN}$ belongs to $S$, then $(x^k)_{k\in\NN}$ converges weakly to $x^*$.
\end{proposition}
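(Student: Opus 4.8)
The plan is to handle the three claims in order, the common engine being that Fej\'er monotonicity forces, for each fixed $x\in S$, the real sequence $\big(\|x^k-x\|\big)_{k\in\NN}$ to be eventually nonincreasing. First I would fix an arbitrary $x\in S$ and invoke the definition to obtain $k_0\in\NN$ with $\|x^{k+1}-x\|\le\|x^k-x\|$ for all $k\ge k_0$. The tail $\big(\|x^k-x\|\big)_{k\ge k_0}$ is then monotone nonincreasing and bounded below by $0$, with only finitely many terms preceding it; this single observation yields both (i) and (ii). For (i), the tail is bounded by $\|x^{k_0}-x\|$ and the finitely many earlier terms are trivially bounded, so $\big(\|x^k-x\|\big)_{k\in\NN}$ is bounded and hence $\|x^k\|\le\|x^k-x\|+\|x\|$ shows $(x^k)_{k\in\NN}$ is bounded. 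For (ii), a nonincreasing sequence bounded below converges, and prepending finitely many terms does not affect convergence, so $\big(\|x^k-x\|\big)_{k\in\NN}$ converges for every $x\in S$.

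For (iii) I would first note that, by (i) together with the reflexivity of $\HH$, the bounded sequence $(x^k)_{k\in\NN}$ admits at least one weak accumulation point, and by hypothesis every such point lies in $S$. The crux is to show the weak accumulation point is \emph{unique}. So suppose $x^\ast$ and $y^\ast$ are two weak accumulation points, both in $S$. Expanding the squared norms gives the identity
$$\|x^k-x^\ast\|^2-\|x^k-y^\ast\|^2=2\la x^k,\,y^\ast-x^\ast\ra+\|x^\ast\|^2-\|y^\ast\|^2,$$
whose left-hand side converges by part (ii); hence the scalar sequence $\la x^k,\,y^\ast-x^\ast\ra$ converges to some limit $\ell$. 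Passing to a subsequence $x^{k_j}\weakly x^\ast$ yields $\ell=\la x^\ast,\,y^\ast-x^\ast\ra$, while passing to a subsequence along which $x^k\weakly y^\ast$ yields $\ell=\la y^\ast,\,y^\ast-x^\ast\ra$. Subtracting gives $\la x^\ast-y^\ast,\,y^\ast-x^\ast\ra=0$, i.e. $\|x^\ast-y^\ast\|^2=0$, so $x^\ast=y^\ast$.

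With uniqueness in hand I would close by the standard Hilbert-space fact that a bounded sequence with a single weak sequential accumulation point converges weakly to it: any weakly convergent subsequence has its limit among the accumulation points, hence equal to $x^\ast$, and since every subsequence of the bounded sequence has a further weakly convergent sub-subsequence (necessarily toward $x^\ast$), the whole sequence converges weakly to $x^\ast$. The only step demanding genuine care is this last passage, since the weak topology of $\HH$ is not metrizable in general; I would therefore argue entirely through successive subsequence extraction rather than through weak neighborhoods, which keeps the reasoning clean in the Hilbert-space setting. The remainder of the proof is routine monotone-sequence bookkeeping.
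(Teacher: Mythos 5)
Your proof is correct. Note that the paper does not prove this proposition at all --- it is quoted verbatim from Proposition 5.4 and Theorem 5.5 of the Bauschke--Combettes reference \cite{HP} --- and your argument is precisely the standard one found there: monotone-and-bounded for (i)--(ii), and for (iii) the expansion $\|x^k-x^\ast\|^2-\|x^k-y^\ast\|^2=2\la x^k, y^\ast-x^\ast\ra+\|x^\ast\|^2-\|y^\ast\|^2$ to force uniqueness of the weak cluster point, followed by the subsequence-of-subsequence argument (correctly handled despite the non-metrizability of the weak topology). One small point worth flagging: the hypothesis as printed says ``Fej\'er convergent to \emph{a point} in $S$,'' which would not suffice for conclusion (ii) to hold for \emph{all} $x\in S$; you implicitly (and rightly) read the hypothesis as Fej\'er monotonicity with respect to the whole set $S$, which is what the cited source assumes and what the paper actually uses.
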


\section{The Conceptual Forward-Backward-Half-Forward Algorithm}\label{section3}

The conceptual modification of the FBHF splitting algorithm uses the parameters $\theta, \delta\in(0,1)$. It is defined as follows:
 \vspace{-0.25in}
\begin{center}\fbox{\begin{minipage}[b]{\textwidth}

\noindent{\bf Conceptual Algorithm.} 

\medskip

\noindent{\bf Step 0. (Initialization):}
Take
\begin{equation*}\label{algoritmo_1_paso_0}
x^0\in\HH,\quad \mbox{and} \quad\alpha_{-1}>0.
\end{equation*}
\smallskip

\noindent {\bf Step 1. (Backtracking):} Given $x^k$ and $\alpha_{k-1}$ define
\begin{equation}{\label{jota}}
\bar{x}_j^k:=J_{\alpha_{k-1}\theta^jB}(x^{k}-\alpha_{k-1}\theta^jAx^{k}).
\end{equation}
Start the inner loop in $j$ to compute $j(k)$ as the smallest $j\in\NN$ such that 
\begin{equation}\label{linesearch}
\alpha_k\big\langle A_2x^k-A_2\bar{x}_j^k, x^k-\bar{x}_j^k\big\rangle\leq\delta\|x^k-\bar{x}_j^k\|^2. 
\end{equation}

\smallskip

\noindent {\bf Step 2. (Forward step):} Set
$\alpha_k:=\alpha_{k-1}\theta^{j(k)},$
\begin{equation}{\label{xbar}}
\bar{x}^k:=\bar x^k_{j(k)}=J_{\alpha_{k}B}(x^{k}-\alpha_{k}Ax^{k})
\end{equation}
and
\begin{equation}{\label{Fk}}
x^{k+1}:=\mathcal{F}(x^k,\bar x^k).
\end{equation}

\noindent {\bf Stopping Criterion:} If $x^{k+1}=x^k$ then stop.
\end{minipage}}\end{center}
We consider two projection variants of the {\bf Conceptual Algorithm}, which are called  {\bf Method 1} and {\bf Method 2} respectively. It will be used two different forward steps $\mathcal{F}=\mathcal{F}_1$ and $\mathcal{F}=\mathcal{F}_2$ on the projection steps in \eqref{Fk} as follows. Take any $\bar \delta$ such that $1-\delta-\bar \delta>0$ and define $r_k:=\frac{\bar \delta}{\alpha_k}\|x^k-\bar{x}^k\|^2$,
\begin{equation}\label{H(x)}
\Tau_k:=\left\{x\in\HH \,\left|\,\left\langle \frac{x^k-\bar{x}^k}{\alpha_k}-(A_2x^k-A_2\bar{x}^k),x-\bar{x}^k\right\rangle\le r_k\right.\right\}
\end{equation}
and
\begin{equation}
  \mathcal{F}_1(x^k,\bar x^k):=\Pi_{\Tau_k}(x^k).\label{iterado2}
\end{equation}
Moreover, set
\begin{equation}\label{W(x)}
\Gamma_k:=\menge{x\in\HH}{\la x^0-x^k,x-x^k\ra\le 0}
\end{equation}
and
\begin{equation}
  \mathcal{F}_2(x^k,\bar x^k):=\Pi_{\Tau_k\cap \Gamma_k}(x^0)\label{iterado3}.
\end{equation}
Similar forward steps have been used in several papers for solving nonsmooth convex optimization problems \cite{yuniusem},  variational inequalities \cite{yuni-rein-phan,BI}, and nonsmooth inclusion problems \cite{sva, CB}. The existence of $j(k)$, satisfying \eqref{linesearch}, and the well-definition of \eqref{iterado2}  and \eqref{iterado3} will be proved in the next section. It is worth mentioning that the projection steps defined in \eqref{iterado2} and \eqref{iterado3} do not increase the computational burden per iteration, that is, both steps have closed and inexpensive formulae. By using Proposition \ref{proj_C(x)} with $y=\bar x^k$, $z=x^k$, $$u=r^k:=(\bar \delta/\alpha_k) \|x^k-\bar x^k\|^2$$ and 
$$v=\bar{w}_2^k:=\frac{x^k-\bar{x}^k}{\alpha_k}-(A_2x^k-A_2\bar{x}^k),$$ we have $\Tau_k=\Tau^{r_k}_{\bar x^k,\bar w^k}$ and $\Gamma_k=\Gamma_{x^k,x^0}$. Moreover, since $x^k\notin \Tau_k$ which is proved below in Proposition \ref{H<=>S^*}, $$\mathcal{F}_1(x^k,\bar x^k)= \Pi_{\Tau_k}(x^k)= x^k-\frac{\la\bar{w}_2^k,x^k-\bar{x}^k\ra-r_k\,}{\|\bar{w}_2^k\|^2}\,\bar{w}_2^k$$ and also $$\mathcal{F}_2(x^k,\bar x^k)=\Pi_{\Tau_k\cap\Gamma_k} (x^0)=x^0-\lambda^k_1\bar{w}_2^k-\lambda^k_2(x^0-x^k),$$ where $\lambda^k_1,\lambda^k_2$ are given by, 
\begin{equation}\label{l1}\lambda^k_1=\left\{\!\begin{array}{lll}\displaystyle\frac{\left(\la\bar{w}_2^k,x^0-\bar{x}^k\ra-r_k\right)\left\|x^0-x^k\right\|^2-\la\bar{w}_2^k,x^0-x^k\ra\left\|x^0-x^k\right\|^2}{\left\|\bar{w}_2^k\right\|^2\left\|x^0-x^k\right\|^2-\la\bar{w}_2^k,x^0-x^k\ra^2},\!&\mbox{if}&\!\!\!\displaystyle\frac{\langle\bar{w}_2^k,x^0-x^k\rangle}{\|\bar{w}_2^k\|\|x^0-x^k\|}<1\\
\\\displaystyle\frac{\la\bar{w}_2^k,x^0-\bar{x}^k\ra-r_k\,}{\|\bar{w}_2^k\|^2},\!&\mbox{if}&\!\!\!\displaystyle\frac{\langle\bar{w}_2^k,x^0-x^k\rangle}{\|\bar{w}_2^k\|\|x^0-x^k\|}=1\end{array}\right.\end{equation} and
\begin{equation}\label{l2}\lambda^k_2=\left\{\begin{array}{lll}\displaystyle\frac{\left\|\bar{w}_2^k\right\|^2\left\|x^0-x^k\right\|^2-\la\bar{w}_2^k,x^0-x^k\ra\left(\la\bar{w}_2^k,x^0-\bar{x}^k\ra-r_k\right)}{\left\|\bar{w}_2^k\right\|^2\left\|x^0-x^k\right\|^2-\la\bar{w}_2^k,x^0-x^k\ra^2},&\mbox{if}&\displaystyle\frac{\langle\bar{w}_2^k,x^0-x^k\rangle}{\|\bar{w}_2^k\|\|x^0-x^k\|}<1\\0,&\mbox{if}&\displaystyle\frac{\langle\bar{w}_2^k,x^0-x^k\rangle}{\|\bar{w}_2^k\|\|x^0-x^k\|}=1.\end{array}\right.\end{equation}

\section{Convergence Analysis}\label{section4}
We start this section by presenting some technical results that are useful in analyzing the convergence properties of the two proposed methods. We also prove that {\bf Conceptual Algorithm} is well-defined. We start proving that \eqref{linesearch} is satisfied by $j$ sufficiently large, hence $\alpha_k$ is well defined. From now on, we assume that $A_2:\HH\to\HH$ is a uniformly continuous mapping. This assumption is standard to prove weak convergence of the FBHF splitting method without the Lipschitz continuity assumption. Moreover, we assume that the solution set of the inclusion problem \eqref{A1+A2+B}, $\zer(A_1+A_2+B)$, is nonempty. 

\begin{proposition}
	\label{unifm-cont-welldefined}
The inequality \eqref{linesearch} in the backtracking strategy holds after finitely many steps.
\end{proposition}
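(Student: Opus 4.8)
The plan is to argue by contradiction, exploiting the extra factor of the trial stepsize on the left-hand side of \eqref{linesearch}. Write $\alpha_j:=\alpha_{k-1}\theta^{j}$ for the trial stepsize and $\bar x_j:=\bar x_j^k=J_{\alpha_j B}(x^k-\alpha_j Ax^k)$, so that \eqref{fermat} gives $(\bar x_j,\,w_j-Ax^k)\in\gr(B)$ with $w_j:=(x^k-\bar x_j)/\alpha_j$. Suppose, for contradiction, that \eqref{linesearch} fails for every $j\in\NN$. Then in particular $\bar x_j\neq x^k$ (otherwise both sides vanish and the test holds), and after applying Cauchy--Schwarz to the inner product and dividing by $\|x^k-\bar x_j\|>0$ one obtains
\[
\delta\,\|x^k-\bar x_j\| < \alpha_j\,\|A_2x^k-A_2\bar x_j\| \qquad\text{for all } j\in\NN .
\]

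First I would establish that $(\bar x_j)_{j\in\NN}$ is bounded. Picking any $x^\ast\in\zer(A+B)$ (nonempty by hypothesis) gives $-Ax^\ast\in Bx^\ast$; pairing this with $(\bar x_j,\,w_j-Ax^k)\in\gr(B)$ through the monotonicity of $B$, multiplying by $\alpha_j>0$, and expanding $\langle\bar x_j-x^\ast,x^k-\bar x_j\rangle$ yields, via Cauchy--Schwarz and $\alpha_j\le\alpha_{k-1}$,
\[
\|\bar x_j-x^\ast\|\le\|x^k-x^\ast\|+\alpha_{k-1}\|Ax^k-Ax^\ast\| ,
\]
so the sequence is bounded. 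Since $A_2$ is uniformly continuous it maps bounded sets to bounded sets, hence $(A_2\bar x_j)_{j}$ is bounded as well.

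Now I would feed this back into the displayed inequality. Because $\alpha_j\to 0$ while $\|A_2x^k-A_2\bar x_j\|$ stays bounded, its right-hand side tends to $0$, forcing $\bar x_j\to x^k$ strongly; uniform continuity of $A_2$ then gives $\|A_2x^k-A_2\bar x_j\|\to 0$. Rewriting the same inequality as $\|w_j\|=\|x^k-\bar x_j\|/\alpha_j<\delta^{-1}\|A_2x^k-A_2\bar x_j\|$ shows $w_j\to 0$, whence $w_j-Ax^k\to -Ax^k$. Since $(\bar x_j,\,w_j-Ax^k)\in\gr(B)$ and $\gr(B)$ is closed in the strong topology (Lemma \ref{bound}(iii)), we conclude $-Ax^k\in Bx^k$, i.e. $x^k\in\zer(A+B)$. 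By Proposition \ref{parada} this means $x^k=J_{\alpha_j B}(x^k-\alpha_j Ax^k)=\bar x_j$ for every $j$, contradicting $\bar x_j\neq x^k$. Hence \eqref{linesearch} holds after finitely many steps.

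The main obstacle is the rate comparison in the last paragraph: it is not enough to observe that both sides of \eqref{linesearch} tend to $0$, since a priori they could do so at comparable rates. The decisive point is that the negated test, combined with the boundedness of $(A_2\bar x_j)$ and the vanishing of $\alpha_j$, self-improves into the stronger statement $w_j\to 0$; only this lets the strong--strong closedness of $\gr(B)$ identify $x^k$ as a zero of $A+B$ and collide with $\bar x_j\neq x^k$. I would also emphasize that no case distinction on whether $x^k\in\overline{\dom B}$ is required, because the contradiction hypothesis itself delivers $\bar x_j\to x^k$.
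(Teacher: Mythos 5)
Your proof is correct, and while it follows the same broad contradiction scheme as the paper (negate \eqref{linesearch}, apply Cauchy--Schwarz, divide by $\|x^k-\bar x_j\|$, drive the residual to zero, and invoke graph closedness), it differs in two substantive and worthwhile ways. First, you actually \emph{prove} that $(A_2\bar x_j)_j$ is bounded, by bounding $\|\bar x_j - x^\ast\|$ via the monotonicity of $B$ against a zero $x^\ast$ of $A+B$ and then using that a uniformly continuous map between normed spaces carries bounded sets to bounded sets; the paper's proof simply asserts that $\|A_2x^k-A_2\bar x_j^k\|$ is bounded without justification, so your argument fills a genuine (if minor) gap. Second, you close the argument inside $\gr(B)$: since $w_j-Ax^k\in B\bar x_j$ with $Ax^k$ a \emph{fixed} vector and $w_j\to 0$, $\bar x_j\to x^k$ strongly, the strong--strong closedness of $\gr(B)$ (Lemma \ref{bound}(iii)) immediately gives $-Ax^k\in Bx^k$. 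The paper instead folds $A\bar x_j^k$ into the residual, defines $\bar w_j^k\in(A+B)\bar x_j^k$, and appeals to the weak--strong closedness of $\gr(A+B)$; this forces it to also show $\|A_1x^k-A_1\bar x_j^k\|\to 0$ (via cocoercivity implying Lipschitz continuity) and tacitly relies on the maximal monotonicity of the sum $A+B$. Your route avoids both, needing only the continuity of $A$ at no point beyond the single evaluation $Ax^k$, and is therefore the more economical of the two. Your observation that the negated test already forces $\bar x_j\neq x^k$ also lets you dispense with the paper's preliminary case split on whether $x^k\in\zer(A+B)$.
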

\begin{proof}
If $x^k\in \zer(A_1+A_2+B)$ then \eqref{linesearch} automatically holds because Proposition \ref{parada}. Thus, assume that $x^k$ is not a solution, i.e., $x^k\notin \zer(A_1+A_2+B)$. So, using \eqref{jota} and Proposition \ref{parada}, we have $x^k\neq \bar x_j^k=(I+\alpha_{k-1}\theta^jB)^{-1}(x^{k}-\alpha_{k-1}\theta^jAx^{k})$ for all $j\in\NN$. The proof of the well-definition of $j(k)$ goes by contradiction. Assume that \eqref{linesearch} does not hold, i.e.,  
$$\delta\|x^k-\bar{x}^k_j\|^2 < \alpha_{k-1}\theta^j\big\langle A_2x^k-A_2\bar{x}^k_j,x^k-\bar{x}^{k}_{j}\big\rangle\le \alpha_{k-1}\theta^j \|A_2x^k-A_2\bar{x}^k_j\|\|x^k-\bar{x}^{k}_{j}\|,$$ using  the Cauchy-Schwartz inequality in the last inequality.
Dividing by $\|x^k-\bar{x}^k_j\|\neq 0$, we get 
\begin{align*}
\delta\|x^k-\bar{x}^k_j\| < \alpha_{k-1}\theta^j \|A_2x^k-A_2\bar{x}^{k}_{j}\|.
\end{align*}
Since $\theta\in (0,1)$ and $\|A_2x^k-A_2\bar{x}^{k}_{j}\|$ is bounded for all $j\in\NN$, then by letting $j\to +\infty$ the right hand side of the last inequality goes to zero. Hence $\|x^k-\bar{x}^k_j\|\to 0$. Since $A$ is uniformly continuous, we have 
\begin{equation}\label{Akto0}\|A_2x^k-A_2\bar{x}^k_j\|\to 0.\end{equation} 
Consequently, \begin{equation}\label{xk-barxkto0}\displaystyle\frac{\|x^k-\bar{x}^k_j\|}{\alpha_{k-1}\theta^j}\to 0.\end{equation} 
Moreover, the $\beta$-cocoersivity of $A_1$ implies $1/\beta$-Lipschitz continuity. Then, \begin{equation*}\lim_{j\to\infty}\|A_1x^k-A_1\bar{x}^k_j\|\le 1/\beta\lim_{j\to\infty}\|x^k-\bar{x}^k_j\|=0, \end{equation*} which implies \begin{equation}\label{A22kto0}\|A_1x^k-A_1\bar{x}^k_j\|\to 0.\end{equation} 
Define, $$\bar{w}^k_j:=\displaystyle\frac{x^k-\bar{x}^k_j}{\alpha_{k-1}\theta^j}-(Ax^k-A\bar{x}^k_j).$$ It follows from \eqref{fermat} that $\bar{w}^k_j\in(A+B)\bar{x}^k_j$, or equivalently, $$(\bar{x}^k_j,\bar{w}^k_j)\in \gr(A+B).$$ Observe that $A=A_1+A_2$ and by the Cauchy-Schwarz inequality, we get
\begin{align*}\|\bar{w}^k_j\|&=\left\|\displaystyle\frac{x^k-\bar{x}^k_j}{\alpha_{k-1}\theta^j}-\left[(A_1+A_2)x^k-(A_1+A_2)\bar{x}^k_j\right]\right\|\\&\le \displaystyle\frac{\|x^k-\bar{x}^k_j\|}{\alpha_{k-1}\theta^j}+\|A_1x^k-A_1\bar{x}^k_j\|+\|A_2x^k-A_2\bar{x}^k_j\|.\end{align*}
Hence,   $\bar{w}^k_j$ converges to $0$ by using \eqref{Akto0}, \eqref{xk-barxkto0} and \eqref{A22kto0} above. Since $\bar{x}^k_j\rightharpoonup x^k$ and $\bar{w}^k_j\to 0$ and by Lemma \ref{bound}(ii), $\gr (A+B)$ is closed in the weak-strong topology. So,  $$(x^k,0)\in \gr(A+B),$$ or  equivalently,
$0\in(A+B)x^k=(A_1+A_2+B)x^k$. Therefore, $x^k\in \zer(A_1+A_2+B)$ which is a contradiction. 
\end{proof}
\begin{remark} 
We notice that if $A_2$ is $L_2$-Lipschitz continuous then any $\alpha\leq\displaystyle\frac{\delta}{L_2}$ satisfies the backtracking inequality \eqref{linesearch}. Actually, if we use the Cauchy-Schwartz inequality in the left hand side of \eqref{linesearch}, we get $$\alpha\la A_2x^k-A_2\bar{x}^k,x^k-\bar{x}^k\ra\leq\frac{\delta}{L_2}\|A_2x^k-A_2\bar{x}^k\|\|x^k-\bar{x}^k\|\leq\delta\|x^k-\bar{x}^k\|^2.$$ Moreover, it is easy to prove that $(\alpha_k)_{k\in\NN}$ the sequence generated by the backtracking strategy given by \eqref{linesearch} satisfies $$\alpha_k\ge \min\left\{\alpha_{-1},\frac{\delta}{L_2}\right\}$$ for all $k\in\NN$. 
\end{remark}
From now on, we assume that $\alpha_{-1}\le 4\beta\bar \delta$ where $\beta$ is the cocoercive constant for $A_1$. 
\begin{lemma}
	\label{S*subsetH}
Let $(x^k)_{k\in\NN}$, $(\bar x^k)_{k\in\NN}$ and $(\alpha_k)_{k\in\NN}$ be the sequences generated by {\bf Conceptual Algorithm}. Then, for all $k\in\NN$, 
\item [ {\bf (i)}] $\displaystyle\frac{x^k-\bar{x}^k}{\alpha_k}-(A_2x^k-A_2\bar x^k)\in (A_2+B)\bar{x}^k+A_1x^k$; 

\item [ {\bf (ii)}] $\zer(A_1+A_2+B)\subseteq \Tau_k$.
\end{lemma}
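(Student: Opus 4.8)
The plan is to derive (i) directly from the resolvent characterization \eqref{fermat} and then feed it into a monotonicity argument for (ii), with the $\beta$-cocoercivity of $A_1$ supplying the single quantitative estimate that links to the standing assumption $\alpha_{-1}\le 4\beta\bar\delta$.

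For part (i), I would start from the definition \eqref{xbar}, namely $\bar{x}^k=J_{\alpha_k B}(x^k-\alpha_k Ax^k)$ with $A=A_1+A_2$, and apply \eqref{fermat} with $x=x^k$, $y=\bar{x}^k$, $\alpha=\alpha_k$. This yields $\frac{x^k-\bar{x}^k}{\alpha_k}-Ax^k\in B\bar{x}^k$. Splitting $Ax^k=A_1x^k+A_2x^k$ and adding $A_2\bar{x}^k$ to both sides turns the right-hand side into $(A_2+B)\bar{x}^k$; adding $A_1x^k$ back then rewrites the left-hand side as $\frac{x^k-\bar{x}^k}{\alpha_k}-(A_2x^k-A_2\bar{x}^k)$, which is exactly the claimed membership. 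This part is pure algebra of set inclusions.

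For part (ii), fix $x^*\in\zer(A+B)$ and abbreviate $\bar{w}_2^k:=\frac{x^k-\bar{x}^k}{\alpha_k}-(A_2x^k-A_2\bar{x}^k)$, so that $\Tau_k=\{x:\langle\bar{w}_2^k,x-\bar{x}^k\rangle\le r_k\}$; I must verify $\langle\bar{w}_2^k,x^*-\bar{x}^k\rangle\le r_k$. By part (i), $\bar{w}_2^k-A_1x^k\in(A_2+B)\bar{x}^k$, while $0\in(A_1+A_2+B)x^*$ gives $-A_1x^*\in(A_2+B)x^*$. Since $A_2+B$ is monotone (sum of the monotone $A_2$ and the maximally monotone $B$), applying monotonicity to the two pairs $(\bar{x}^k,\bar{w}_2^k-A_1x^k)$ and $(x^*,-A_1x^*)$ of $\gr(A_2+B)$ gives, after rearranging, $\langle\bar{w}_2^k,x^*-\bar{x}^k\rangle\le\langle A_1x^k-A_1x^*,x^*-\bar{x}^k\rangle$.

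The main work, and the place where the hypotheses enter, is bounding this right-hand side. I would split $x^*-\bar{x}^k=(x^*-x^k)+(x^k-\bar{x}^k)$, use $\beta$-cocoercivity of $A_1$ on the first piece (yielding $\langle A_1x^k-A_1x^*,x^*-x^k\rangle\le-\beta\|A_1x^k-A_1x^*\|^2$), and apply Young's inequality with weight $\epsilon=2\beta$ to the second. The generated term $\beta\|A_1x^k-A_1x^*\|^2$ then cancels the cocoercivity gain exactly, leaving $\langle\bar{w}_2^k,x^*-\bar{x}^k\rangle\le\frac{1}{4\beta}\|x^k-\bar{x}^k\|^2$. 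Finally, since $\alpha_k=\alpha_{k-1}\theta^{j(k)}$ with $\theta\in(0,1)$ the stepsizes are non-increasing, so $\alpha_k\le\alpha_{-1}\le 4\beta\bar\delta$, whence $\frac{1}{4\beta}\le\frac{\bar\delta}{\alpha_k}$ and $\langle\bar{w}_2^k,x^*-\bar{x}^k\rangle\le\frac{\bar\delta}{\alpha_k}\|x^k-\bar{x}^k\|^2=r_k$, i.e. $x^*\in\Tau_k$. The delicate point is choosing the Young weight so that the cocoercivity term cancels precisely and produces the constant $1/(4\beta)$ that is matched by the assumption $\alpha_{-1}\le 4\beta\bar\delta$; everything else is routine.
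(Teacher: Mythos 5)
Your proposal is correct and follows essentially the same route as the paper: part (i) is the identical resolvent manipulation via \eqref{fermat}, and part (ii) uses the same monotonicity argument for $A_2+B$ followed by cocoercivity of $A_1$ plus a Young-type estimate, with your weight choice ($\gamma=\alpha_k/(2\beta)$ in the paper's notation, making the cocoercivity term cancel exactly) being a marginally cleaner version of the paper's choice $\gamma=\alpha_{-1}/(2\beta)$. Both lead to the same bound $r_k=\frac{\bar\delta}{\alpha_k}\|x^k-\bar{x}^k\|^2$ via $\alpha_k\le\alpha_{-1}\le 4\beta\bar\delta$.
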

\begin{proof} By the definition of $\bar x^k$ given in \eqref{xbar}  and \eqref{fermat},  $\displaystyle\frac{x^k-\bar{x}^k}{\alpha_k}-Ax^k\in B\bar{x}^k$. So, (i) follows after add $A_2\bar x^k+A_1x^k$ in both sides and use that $A=A_1+A_2$. To prove (ii) take any $x^*\in \zer(A_1+A_2+B)$. Then, there exists $v^{*}\in B(x^{*})$, such that $0=A_1x^{*}+A_2x^*+v^{*}$. Using (i) and also the monotonicity of $A_2+B$ give us
\begin{align*}0&\le\Big\langle\frac{x^k-\bar{x}^k}{\alpha_k}-(A_2x^k-A_2\bar{x}^k)-A_1x^k-(A_2x^*+v^*), \bar{x}^k-x^*\Big\rangle\\&=\Big\langle\frac{x^k-\bar{x}^k}{\alpha_k}-(A_2x^k-A_2\bar{x}^k)-A_1x^k+A_1x^*, \bar{x}^k-x^*\Big\rangle.\end{align*}
Rearranging, we have 
\begin{align*}\Big\langle\frac{x^k-\bar{x}^k}{\alpha_k}-(A_2x^k-A_2\bar{x}^k), x^*-\bar{x}^k\Big\rangle &\le\Big\langle A_1x^*-A_1x^k, \bar{x}^k-x^*\Big\rangle.\end{align*} 
Now, by using the $\beta$-cocoercivity of $A_1$, we get
\begin{align*}\langle A_1x^*-A_1x^k, \bar{x}^k-x^*\rangle&=\langle A_1x^*-A_1x^k, {x}^k-x^*\rangle+\langle A_1x^*-A_1x^k, \bar{x}^k-x^k\rangle\\&\le -\beta \|A_1x^*-A_1x^k\|^2+\frac{1}{2\alpha_k}\left[2\langle \alpha_kA_1x^*-\alpha_kA_1x^k, \bar{x}^k-x^k\rangle\right].\end{align*} So, using the identity, for any $\alpha,\gamma>0$ and $a,b\in\HH$,
$
2\langle \alpha b, a\rangle= \gamma\|a\|^2+\frac{\alpha^2}{\gamma}\|b\|^2-\gamma\|a-\frac{\alpha}{\gamma}b\|^2$ in the right hand side of the last inequality,
 we get
\begin{align*}\langle A_1x^*-A_1x^k, \bar{x}^k-x^*\rangle\le&\frac{\gamma}{2\alpha_k}\|\bar{x}^k-x^k\|^2+\left(\frac{\alpha_k}{2\gamma}-\beta\right)\|A_1x^*-A_1x^k\|^2\\&-\frac{\gamma}{2\alpha_k}\|\bar{x}^k-x^k-\frac{\alpha_k}{\gamma}(A_1x^*-A_1x^k)\|^2,\end{align*} for any $\gamma>0$.
Therefore, taking $\gamma=\frac{\alpha_{-1}}{2\beta}\ge \frac{\alpha_{k}}{2\beta}$ for all $k\in\NN$ and using that $\alpha_{-1}\le4\beta\bar \delta$, we have
$$\Big\langle\frac{x^k-\bar{x}^k}{\alpha_k}-(A_2x^k-A_2\bar{x}^k), x^*-\bar{x}^k\Big\rangle\le \frac{\bar \delta}{\alpha_k}\|x^k-\bar{x}^k\|^2=r_k$$ and by \eqref{H(x)}, $x^{*}\in \Tau_k$ as desired.
\end{proof}
\begin{proposition}\label{coro}
Let $(x^k)_{k\in\NN}$, $(\bar x^k)_{k\in\NN}$ and $(\alpha_k)_{k\in\NN}$ be the sequences generated by {\bf Conceptual Algorithm}. Then, for all $k\in\NN$, $\alpha_k\leq\alpha_{-1}$ and
\begin{equation}\label{desig-muy-usada}
\Big\langle\frac{x^k-\bar{x}^k}{\alpha_k}-(A_2x^k-A_2\bar{x}^k),x^{k}-\bar{x}^{k}\Big\rangle\geq\frac{1-\delta}{\alpha_{k}}\|x^{k}-\bar{x}^k\|^2\geq0.
\end{equation}
\end{proposition}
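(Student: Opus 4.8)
The plan is to establish the two assertions separately, both by elementary manipulations: the bound $\alpha_k\le\alpha_{-1}$ is immediate from the recursive structure of the stepsize, while the displayed inequality comes from expanding the inner product and invoking the accepted backtracking condition \eqref{linesearch}. No auxiliary machinery beyond the algorithm's definitions is needed.

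For the bound, recall from Step 2 that $\alpha_k:=\alpha_{k-1}\theta^{j(k)}$ with $j(k)\in\NN$ and $\theta\in(0,1)$. Since $j(k)\ge 0$ forces $\theta^{j(k)}\le 1$, we get $\alpha_k\le\alpha_{k-1}$ for every $k$, and a one-line induction yields $\alpha_k\le\alpha_{k-1}\le\cdots\le\alpha_0\le\alpha_{-1}$, which is the claim $\alpha_k\le\alpha_{-1}$.

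For the inequality, I would first use bilinearity of the inner product to write
\begin{equation*}
\Big\langle\frac{x^k-\bar{x}^k}{\alpha_k}-(A_2x^k-A_2\bar{x}^k),x^{k}-\bar{x}^{k}\Big\rangle
=\frac{1}{\alpha_k}\|x^k-\bar{x}^k\|^2-\big\langle A_2x^k-A_2\bar{x}^k,x^k-\bar{x}^k\big\rangle.
\end{equation*}
The crucial observation is that $\bar{x}^k=\bar{x}^k_{j(k)}$ and $\alpha_k=\alpha_{k-1}\theta^{j(k)}$, so the inner-loop acceptance criterion \eqref{linesearch} holds verbatim at the accepted index, giving $\alpha_k\langle A_2x^k-A_2\bar{x}^k,x^k-\bar{x}^k\rangle\le\delta\|x^k-\bar{x}^k\|^2$, i.e. $\langle A_2x^k-A_2\bar{x}^k,x^k-\bar{x}^k\rangle\le\tfrac{\delta}{\alpha_k}\|x^k-\bar{x}^k\|^2$. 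Substituting this estimate into the expansion yields
\begin{equation*}
\frac{1}{\alpha_k}\|x^k-\bar{x}^k\|^2-\big\langle A_2x^k-A_2\bar{x}^k,x^k-\bar{x}^k\big\rangle
\ge\frac{1}{\alpha_k}\|x^k-\bar{x}^k\|^2-\frac{\delta}{\alpha_k}\|x^k-\bar{x}^k\|^2=\frac{1-\delta}{\alpha_k}\|x^k-\bar{x}^k\|^2,
\end{equation*}
which is the first inequality. The final nonnegativity then follows at once from $\delta\in(0,1)$ (so $1-\delta>0$), $\alpha_k>0$, and $\|x^k-\bar{x}^k\|^2\ge 0$.

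I do not expect a genuine obstacle: the argument is a direct computation. The only point needing a little care is the bookkeeping that identifies $\alpha_k$ with $\alpha_{k-1}\theta^{j(k)}$ and $\bar{x}^k$ with $\bar{x}^k_{j(k)}$, so that the backtracking test \eqref{linesearch}, which defines $j(k)$, transfers correctly to the accepted iterate $\bar{x}^k$ and stepsize $\alpha_k$ appearing in \eqref{desig-muy-usada}.
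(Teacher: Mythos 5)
Your proposal is correct and follows essentially the same route as the paper's proof: the stepsize bound is read off from $\alpha_k=\alpha_{k-1}\theta^{j(k)}$ with $\theta\in(0,1)$, and the inequality is obtained by expanding the inner product bilinearly and substituting the accepted backtracking condition \eqref{linesearch}. Your extra care in identifying $\bar{x}^k$ with $\bar{x}^k_{j(k)}$ and $\alpha_k$ with $\alpha_{k-1}\theta^{j(k)}$ is a welcome clarification of a point the paper leaves implicit, but it does not change the argument.
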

\begin{proof} The fact that $\alpha_k\leq\alpha_{-1}$ for all $k\in\NN$ follows from the definition of the backtracking strategy inside of {\bf Conceptual Algorithm}.
Using the line search inequality, we have 
\begin{align*}
\Big\langle\frac{x^k-\bar{x}^k}{\alpha_k}-(A_2x^k-A_2\bar{x}^k),x^{k}-\bar{x}^{k}\Big\rangle&=\frac{\|x^k-\bar{x}^k\|^2}{\alpha_k}-\langle A_2x^k-A_2\bar{x}^k,x^{k}-\bar{x}^{k}\rangle\\&\geq \frac{\|x^k-\bar{x}^k\|^2}{\alpha_{k}}-\frac{\delta}{\alpha_{k}}\|x^{k}-\bar{x}^{k}\|^2\\&= \frac{1-\delta}{\alpha_{k}}\|x^k-\bar{x}^k\|^2.
\end{align*}
So, the result follows.
\end{proof}
\begin{proposition}\label{H<=>S^*} Let $(x^k)_{k\in\NN}$, $(\bar x^k)_{k\in\NN}$ and $(\alpha_k)_{k\in\NN}$ be the sequences generated by {\bf Conceptual Algorithm}. Then, $x^k\in \Tau_k$ if and only if $x^k\in \zer(A_1+A_2+B)$.
\end{proposition}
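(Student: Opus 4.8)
The plan is to read off the equivalence directly from the two opposing bounds on the same inner product: the lower bound supplied by Proposition \ref{coro} and the upper bound that membership in $\Tau_k$ imposes, with the inequality $1-\delta-\bar\delta>0$ (the defining constraint on $\bar\delta$) serving as the pivot that collapses the two bounds.

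First I would prove the forward implication. Assume $x^k\in\Tau_k$ and substitute $x=x^k$ into the defining inequality \eqref{H(x)} of $\Tau_k$, obtaining
$$\Big\langle \frac{x^k-\bar{x}^k}{\alpha_k}-(A_2x^k-A_2\bar{x}^k),\,x^k-\bar{x}^k\Big\rangle\le r_k=\frac{\bar\delta}{\alpha_k}\|x^k-\bar{x}^k\|^2.$$
Proposition \ref{coro} bounds the very same inner product from below by $\tfrac{1-\delta}{\alpha_k}\|x^k-\bar{x}^k\|^2$. Chaining the two estimates and clearing the common factor $1/\alpha_k>0$ yields $(1-\delta-\bar\delta)\|x^k-\bar{x}^k\|^2\le 0$. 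Since $1-\delta-\bar\delta>0$ by the choice of $\bar\delta$, this forces $\|x^k-\bar{x}^k\|^2=0$, i.e.\ $x^k=\bar{x}^k$. Because $\bar{x}^k=J_{\alpha_k B}(x^k-\alpha_k Ax^k)$ by \eqref{xbar}, Proposition \ref{parada} converts this fixed-point equality into $x^k\in\zer(A+B)$, as required.

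For the converse, suppose $x^k\in\zer(A+B)$. Proposition \ref{parada} gives $\bar{x}^k=J_{\alpha_k B}(x^k-\alpha_k Ax^k)=x^k$, so $x^k-\bar{x}^k=0$. Then $r_k=\tfrac{\bar\delta}{\alpha_k}\|x^k-\bar{x}^k\|^2=0$, and the inner product in \eqref{H(x)} evaluated at $x=x^k$ has the null vector $x^k-\bar{x}^k$ as one argument, hence equals $0\le r_k=0$; thus $x^k\in\Tau_k$ trivially.

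I do not expect a genuine obstacle here: the argument is short once both bounds are in hand. The only point demanding care is recognizing that the strict inequality $1-\delta-\bar\delta>0$ is exactly what is needed to turn the sandwich of the two bounds into the equality $x^k=\bar{x}^k$ (a mere $1-\delta-\bar\delta\ge 0$ would not suffice). After that, the entire equivalence rests on Proposition \ref{parada}, which encodes the standard fixed-point characterization of $\zer(A+B)$ for the forward-backward operator.
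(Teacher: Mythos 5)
Your proof is correct, and the nontrivial direction ($x^k\in\Tau_k\Rightarrow x^k\in\zer(A+B)$) is exactly the paper's argument: sandwich the inner product between the upper bound $r_k$ coming from membership in $\Tau_k$ and the lower bound of Proposition \ref{coro}, use $1-\delta-\bar\delta>0$ to force $x^k=\bar x^k$, and conclude via Proposition \ref{parada}. For the easy direction the paper simply cites the containment $\zer(A+B)\subseteq\Tau_k$ from Lemma \ref{S*subsetH}, whereas you verify membership directly from $x^k=\bar x^k$ via Proposition \ref{parada}; both are valid, and yours is marginally more self-contained since it bypasses the cocoercivity-based lemma.
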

\begin{proof}
Clearly, from Lemma \ref{S*subsetH}, if $x^k\in \zer(A_1+A_2+B)$ then $x^k\in\Tau_k$. Conversely, 
if $x^k\in \Tau_k$ then
\begin{align*}
\frac{\bar \delta}{\alpha_k}\|x^k-\bar{x}^k\|^2&\geq \Big\langle\frac{x^k-\bar{x}^k}{\alpha_k}-(A_2x^k-A_2\bar{x}^k), x^k-\bar{x}^k\Big\rangle \ge \frac{1-\delta}{\alpha_{k}}\|x^k-\bar{x}^k\|^2 ,
\end{align*} using  Proposition \ref{coro} in the second inequality.
Hence, 
$ \displaystyle\frac{(1-\delta-\bar \delta)}{\alpha_{k}}\|x^k-\bar{x}^k\|^2 \le 0$, 
which implies that
$x^k=\bar{x}^k$ and by Proposition \ref{parada}, $x^k\in \zer(A_1+A_2+B)$. 
\end{proof}
\subsection{Convergence of Method 1}
In this subsection all results are referred to {\bf Method 1}, i.e., with {\bf Conceptual Algorithm} with the iterative step \eqref{Fk} in {\bf Step 2.} as $$x^{k+1}=\mathcal{F}(x^k,\bar x^k)=\mathcal{F}_1(x^k,\bar x^k)=\Pi_{\Tau_k}(x^k).$$
or equivalently, 
\begin{equation}\label{meth1F}
x^{k+1}=\Pi_{\Tau_k}(x^k)=x^k-\frac{\langle \displaystyle \bar w^k_2,x^k-\bar{x}^k\rangle-\frac{\bar \delta}{\alpha_k}\|x^k-\bar{x}^k\|^2}{\|\displaystyle\bar w^k_2\|^2}  \,\bar w^k_2, \end{equation} reminding that $\bar w^k_2=\displaystyle\frac{x^k-\bar{x}^k}{\alpha_k}-(A_2x^k-A_2\bar{x}^k)$.
If we define \begin{equation}\label{beta-k}\lambda_k:=\frac{\langle \displaystyle \bar w^k_2,x^k-\bar{x}^k\rangle-\frac{\bar \delta}{\alpha_k}\|x^k-\bar{x}^k\|^2}{\|\displaystyle\bar w^k_2\|^2}\end{equation} then \eqref{meth1F} becomes 
\begin{equation}\label{giteration}
x^{k+1}=\left(1-\frac{\lambda_k}{\alpha_k}\right)x^k+\frac{\lambda_k}{\alpha_k}\bar{x}^k-\lambda_k(A_2\bar{x}^k-A_2x^k).
\end{equation} The above forward step is interesting on its own. It is possible to use \eqref{giteration} to allow over and under projections onto $\Tau_k$, i.e., changing $\lambda_k$ by  $\gamma \lambda_k$ with $\gamma\in (0,2)$. The analysis of convergence for the resulted new variants follows the same lines of convergence of {\bf Method 1}.  Note further that if $\lambda_k=\alpha_k$, the forward step of the FBHF splitting iteration is recovered from \eqref{giteration}. 
\begin{proposition}\label{stop1}
If {\bf Method 1} stops then $x^k\in \zer(A_1+A_2+B)$.
\end{proposition}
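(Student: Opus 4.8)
The plan is to read the claim off almost directly from the characterisation already established in Proposition \ref{H<=>S^*}, together with the defining property of the metric projection. First I would unpack the hypothesis: by the Stop Criterion of \textbf{Conceptual Algorithm}, the phrase ``\textbf{Method 1} stops'' means precisely that $x^{k+1}=x^k$ at the current index $k$, and by the definition of the forward step \eqref{iterado2} we have $x^{k+1}=\mathcal{F}_1(x^k)=\Pi_{\Tau_k}(x^k)$. Hence the hypothesis is equivalent to $\Pi_{\Tau_k}(x^k)=x^k$.

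Next I would invoke the elementary fact that, for a nonempty closed convex set (here $\Tau_k$ is a closed halfspace by \eqref{H(x)}), the metric projection fixes a point if and only if that point already lies in the set; this is exactly the first branch of the two-branch formula in Proposition \ref{proj_C(x)}, which returns $w$ precisely when $w\in\Tau_{y,v}$. Thus $\Pi_{\Tau_k}(x^k)=x^k$ forces $x^k\in\Tau_k$. Applying Proposition \ref{H<=>S^*}, which states that $x^k\in\Tau_k$ if and only if $x^k\in\zer(A+B)$, then gives $x^k\in\zer(A+B)$, as required.

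If one prefers to argue from the explicit formula \eqref{meth1F} rather than the abstract projection property, I would proceed by contradiction: suppose $x^k\notin\zer(A+B)$. Then Proposition \ref{parada} together with \eqref{xbar} gives $x^k\neq\bar x^k$, and Proposition \ref{coro} yields $\langle\bar w^k_2,x^k-\bar x^k\rangle\geq\frac{1-\delta}{\alpha_k}\|x^k-\bar x^k\|^2>0$, so in particular $\bar w^k_2\neq 0$ and $\lambda_k$ in \eqref{beta-k} is well defined. Since $x^{k+1}=x^k$ in \eqref{meth1F} means $\lambda_k\bar w^k_2=0$, we must have $\lambda_k=0$, that is $\langle\bar w^k_2,x^k-\bar x^k\rangle=r_k=\frac{\bar\delta}{\alpha_k}\|x^k-\bar x^k\|^2$. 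Combining this equality with the lower bound from Proposition \ref{coro} gives $\frac{1-\delta-\bar\delta}{\alpha_k}\|x^k-\bar x^k\|^2\leq 0$, contradicting $1-\delta-\bar\delta>0$ and $x^k\neq\bar x^k$.

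I do not expect any serious obstacle here, since the statement is essentially a restatement of Proposition \ref{H<=>S^*} once the stopping condition is translated into $\Pi_{\Tau_k}(x^k)=x^k$. The only point requiring a little care is that the closed-form projection \eqref{meth1F} was written under the standing assumption $x^k\notin\Tau_k$; to use that formula literally one should phrase the argument as the contradiction above rather than substitute $x^k\in\Tau_k$ into it, whereas the cleaner route simply uses that a projection onto a closed convex set acts as the identity exactly on that set.
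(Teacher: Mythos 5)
Your main argument is correct and is essentially identical to the paper's proof: translate the stop criterion into $\Pi_{\Tau_k}(x^k)=x^k$, deduce $x^k\in\Tau_k$ from the fact that a metric projection fixes exactly the points of the set, and conclude via Proposition \ref{H<=>S^*}. The alternative contradiction argument via \eqref{meth1F} is a fine extra check but not needed.
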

\begin{proof}
If Stopping Criterion is satisfied, then  $x^{k+1}=\Pi_{\Tau_k}(x^k)=x^k$, which implies that $x^k\in \Tau_k$ and by Proposition \ref{H<=>S^*}, $x^k\in \zer(A_1+A_2+B)$. 
\end{proof}

From now on, we assume that {\bf Method 1} does not stop. Note that $\Tau_k$ is nonempty for all $k\in\NN$ by Lemma \ref{H<=>S^*}. Then, the projection step \eqref{iterado2} is well-defined, i.e., if {\bf Method 1} does not stop, it generates an infinite sequence $(x^k)_{k\in\NN}$.
\begin{proposition}\label{prop2}
Let $(x^k)_{k\in\NN}$ be a generated sequence by {\bf Method 1}. Then the following items are satisfied:
\item[{\bf(i)}] The sequence $(x^k)_{k\in\NN}$ is Fej\'er monotone with respect to $\zer(A_1+A_2+B)$;
\item[{\bf(ii)}] The sequence $(x^k)_{k\in\NN}$ is bounded;
\item[{\bf(iii)}] $\lim_{k\to \infty}\left\langle\displaystyle x^k-\bar{x}^k -\alpha_k(A_2x^k-A_2\bar{x}^k),x^k-\bar{x}^k\right\rangle-\bar \delta\|x^k-\bar x^k\|^2=0$.
\end{proposition}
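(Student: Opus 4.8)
The plan is to handle the three items in order: (ii) will be immediate from (i), while (iii) draws on the quantitative estimate produced along the way in (i).

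For (i), I fix $x^*\in\zer(A+B)$. By Lemma \ref{S*subsetH}(ii) we have $x^*\in\Tau_k$ for every $k$, so $\Pi_{\Tau_k}(x^*)=x^*$. Since $x^{k+1}=\Pi_{\Tau_k}(x^k)$ and projection onto the closed convex half-space $\Tau_k$ is firmly nonexpansive, Proposition \ref{proj}(i) applied to the pair $(x^k,x^*)$ gives
\begin{equation*}
\|x^{k+1}-x^*\|^2=\|\Pi_{\Tau_k}(x^k)-\Pi_{\Tau_k}(x^*)\|^2\le\|x^k-x^*\|^2-\|x^k-x^{k+1}\|^2 .
\end{equation*}
In particular $\|x^{k+1}-x^*\|\le\|x^k-x^*\|$ for all $k$, which is Fej\'er convergence of $(x^k)_{k\in\NN}$ to $\zer(A+B)$ (with $k_0=0$). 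Item (ii) is then immediate from Proposition \ref{punto}(i), since $(x^k)_{k\in\NN}$ is Fej\'er convergent to the nonempty set $\zer(A+B)$.

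For (iii), I would first telescope the displayed inequality to obtain $\sum_{k}\|x^{k+1}-x^k\|^2<\infty$, hence $\|x^{k+1}-x^k\|\to0$. Write $q_k$ for the quantity whose limit is claimed. Since $\alpha_k\bar w_2^k=(x^k-\bar x^k)-\alpha_k(A_2x^k-A_2\bar x^k)$, one has $q_k=\alpha_k\big(\langle\bar w_2^k,x^k-\bar x^k\rangle-r_k\big)$, and Proposition \ref{coro} together with $1-\delta-\bar\delta>0$ shows $q_k\ge(1-\delta-\bar\delta)\|x^k-\bar x^k\|^2\ge0$. Because Method 1 does not stop, Proposition \ref{H<=>S^*} gives $x^k\notin\Tau_k$, so the explicit formula \eqref{meth1F} identifies $\|x^{k+1}-x^k\|$ with the distance of $x^k$ to the bounding hyperplane, namely $\|x^{k+1}-x^k\|=(\langle\bar w_2^k,x^k-\bar x^k\rangle-r_k)/\|\bar w_2^k\|$. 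Multiplying yields the key identity
\begin{equation*}
q_k=\alpha_k\,\|\bar w_2^k\|\,\|x^{k+1}-x^k\| .
\end{equation*}
It then remains to bound $\alpha_k\|\bar w_2^k\|$ uniformly in $k$, after which $\|x^{k+1}-x^k\|\to0$ forces $q_k\to0$.

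For that uniform bound I would use $\alpha_k\|\bar w_2^k\|\le\|x^k-\bar x^k\|+\alpha_{-1}\|A_2x^k-A_2\bar x^k\|$ (from $\alpha_k\le\alpha_{-1}$, Proposition \ref{coro}) and argue that $(\bar x^k)_{k\in\NN}$, and hence both $\|x^k-\bar x^k\|$ and $\|A_2x^k-A_2\bar x^k\|$, are bounded. Boundedness of $(\bar x^k)_{k\in\NN}$ follows by writing $\bar x^k=J_{\alpha_kB}(x^k-\alpha_kAx^k)$ and $x^*=J_{\alpha_kB}(x^*-\alpha_kAx^*)$ (Proposition \ref{parada}) and invoking nonexpansiveness of the resolvent (Proposition \ref{inversa}) together with the boundedness of $(x^k)_{k\in\NN}$ from (ii); here the uniform continuity of $A_2$ and the $1/\beta$-Lipschitz continuity of $A_1$ ensure that $A$ maps the bounded sequence $(x^k)_{k\in\NN}$ to a bounded set. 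The main obstacle is precisely this last control of $\alpha_k\|\bar w_2^k\|$ \emph{without} a Lipschitz constant for $A_2$ and \emph{without} a positive lower bound on the $\alpha_k$: since $\|\bar w_2^k\|$ alone may blow up if $\alpha_k\to0$, one must keep the factor $\alpha_k$ attached and exploit that $\alpha_k\bar w_2^k$ is a difference of bounded terms. Establishing boundedness of $(\bar x^k)_{k\in\NN}$ and the fact that a merely uniformly continuous operator still sends bounded sequences to bounded sequences (via a segment/chaining estimate in $\HH$) is where the real content of (iii) resides.
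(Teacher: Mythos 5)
Your proposal is correct and follows essentially the same route as the paper: the Fej\'er inequality from firm nonexpansiveness of $\Pi_{\Tau_k}$ with $\Pi_{\Tau_k}(x^*)=x^*$ gives (i) and (ii), and (iii) comes from combining $\|x^{k+1}-x^k\|\to 0$ with the identity $q_k=\|\alpha_k\bar w_2^k\|\,\|x^{k+1}-x^k\|$ and the boundedness of $\alpha_k\bar w_2^k=x^k-\bar x^k-\alpha_k(A_2x^k-A_2\bar x^k)$, which is exactly the paper's quotient argument in a different packaging. If anything, you are more careful than the paper, which asserts the boundedness of $(\bar x^k)_{k\in\NN}$ and of $\|A_2x^k-A_2\bar x^k\|$ without the resolvent-nonexpansiveness and uniform-continuity justifications you supply.
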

\begin{proof}
(i) Take $x^*\in \zer(A_1+A_2+B)$. Using \eqref{iterado2}, Proposition \ref{proj}(i) and Lemma \ref{H<=>S^*}, we have
\begin{align}\label{fejer-des}\nonumber\|x^{k+1}-x^{*}\|^2&=\|\Pi_{\Tau_k}(x^k)-\Pi_{\Tau_k}(x^{*})\|^2\\ &\leq\nonumber \|x^k-x^*\|^2-\|\Pi_{H_k}(x^k)-x^k\|^2\\ &\leq \|x^k-x^*\|. 
\end{align} 
Thus, the Fej\'er monotonicity applies.
(ii) Using proposition \ref{punto} then the sequence is bounded.
(iii) It follows from \eqref{meth1F} that
\begin{equation*}
x^{k+1}=\Pi_{\Tau_k}(x^k)=x^k-\frac{\langle \displaystyle \bar w^k_2,x^k-\bar{x}^k\rangle-\frac{\bar \delta}{\alpha_k}\|x^k-\bar{x}^k\|^2}{\|\displaystyle\bar w^k_2\|^2}  \,\bar w^k_2, \end{equation*}
and combining this with the second line of \eqref{fejer-des}, we have
\begin{align}\label{ineq}
\|x^{k+1}-x^{*}\|^2 &\leq \|x^k-x^*\|^2 -\left\|x^k-\frac{\langle \displaystyle \bar w^k_2,x^k-\bar{x}^k\rangle-\frac{\bar \delta}{\alpha_k}\|x^k-\bar{x}^k\|^2}{\|\displaystyle\bar w^k_2\|^2}  \,\bar w^k_2-x^k\right\|^2 \nonumber\\ &= \|x^k-x^*\|^2 -\frac{\Big(\langle \displaystyle \bar w^k_2,x^k-\bar{x}^k\rangle-\frac{\bar \delta}{\alpha_k}\|x^k-\bar{x}^k\|^2 \Big)^2}{\|\displaystyle\bar w_2^k\|^2}. \nonumber
\end{align}
Reordering,
\begin{equation*}
\frac{\Big(\langle \displaystyle \bar w^k_2,x^k-\bar{x}^k\rangle-\frac{\bar \delta}{\alpha_k}\|x^k-\bar{x}^k\|^2 \Big)^2}{\|\displaystyle\bar w_2^k\|^2}\le\|x^{k}-x^{*}\|^2-\|x^{k+1}-x^{*}\|^2.
\end{equation*}
It follows from Proposition \ref{punto}(ii) and the definition of $\bar w_2^k$ that 
\begin{align*}
0&=\lim_{k\to\infty}\frac{\left(\Big\langle\displaystyle\frac{x^k-\bar{x}^k}{\alpha_k}-(A_2x^k-A_2\bar{x}^k),x^k-\bar{x}^k\Big\rangle-\frac{\bar \delta}{\alpha_k}\|x^k-\bar{x}^k\|^2\right)^2}{\left\|\displaystyle\frac{x^k-\bar{x}^k}{\alpha_k}-(A_2x^k-A_2\bar{x}^k)\right\|^2}\\&=\lim_{k\to\infty}\frac{\left(\Big\langle x^k-\bar{x}^k-{\alpha_k}(A_2x^k-A_2\bar{x}^k),x^k-\bar{x}^k\Big\rangle-\bar\delta\|x^k-\bar x^k\|^2\right)^2}{\|\displaystyle x^k-\bar{x}^k-{\alpha_k}(Ax^k-A\bar{x}^k)\|^2}.
\end{align*}
The sequence $\left(\|\displaystyle x^k-\bar{x}^k-{\alpha_k}(A_2x^k-A_2\bar{x}^k)\|\right)_{k\in\NN}$ is bounded because the sequence $(x^k)_{k\in\NN}$ and $(\bar x^k)_{k\in\NN}$ are bounded and 
$
\|\displaystyle x^k-\bar{x}^k-{\alpha_k}(A_2x^k-A_2\bar{x}^k)\|\le \|\displaystyle x^k-\bar{x}^k\|+{\alpha_k}\|A_2x^k-A_2\bar{x}^k\|,
$
proving the desired result.
\end{proof}

Next we establish our main convergence result on {\bf Method 1}. 
\begin{theorem}\label{teo1*}
All weak accumulation points of $(x^k)_{k\in\NN}$ belong to $\zer(A_1+A_2+B)$.
\end{theorem}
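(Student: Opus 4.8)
The plan is to extract, from the estimates already in hand, the single fact that $\|x^k-\bar{x}^k\|\to 0$, and then to identify every weak subsequential limit as a zero of $A+B$ by invoking the weak--strong sequential closedness of $\gr(A+B)$ in Lemma~\ref{bound}(ii). First I would combine Proposition~\ref{coro} with Proposition~\ref{prop2}(iii). Writing $\bar{w}_2^k=\tfrac{x^k-\bar{x}^k}{\alpha_k}-(A_2x^k-A_2\bar{x}^k)$, the expression under the limit in Proposition~\ref{prop2}(iii) equals $\alpha_k\la\bar{w}_2^k,x^k-\bar{x}^k\ra-\bar\delta\|x^k-\bar{x}^k\|^2$, which by Proposition~\ref{coro} is bounded below by $(1-\delta-\bar\delta)\|x^k-\bar{x}^k\|^2\ge 0$. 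Since this quantity tends to $0$ and $1-\delta-\bar\delta>0$ by the choice of $\bar\delta$, I conclude $\|x^k-\bar{x}^k\|\to 0$.

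Next I would fix a weak accumulation point $\hat{x}$, say $x^{k_j}\rightharpoonup\hat{x}$; since $\|x^{k_j}-\bar{x}^{k_j}\|\to 0$, also $\bar{x}^{k_j}\rightharpoonup\hat{x}$. By \eqref{fermat}, the element $\bar{w}^k:=\tfrac{x^k-\bar{x}^k}{\alpha_k}-(Ax^k-A\bar{x}^k)$ lies in $(A+B)\bar{x}^k$, and I would write $\bar{w}^k=\tfrac{x^k-\bar{x}^k}{\alpha_k}-(A_1x^k-A_1\bar{x}^k)-(A_2x^k-A_2\bar{x}^k)$. Here the $A_1$-term tends to $0$ by $1/\beta$-Lipschitz continuity and the $A_2$-term tends to $0$ by uniform continuity of $A_2$, both because $\|x^k-\bar{x}^k\|\to 0$. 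Hence $\bar{w}^{k_j}\to 0$ is equivalent to $\tfrac{x^{k_j}-\bar{x}^{k_j}}{\alpha_{k_j}}\to 0$, and once that is shown Lemma~\ref{bound}(ii) applied to $(\bar{x}^{k_j},\bar{w}^{k_j})\in\gr(A+B)$ yields $(\hat{x},0)\in\gr(A+B)$, i.e. $\hat{x}\in\zer(A+B)$.

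The decisive step, and the one I expect to be the main obstacle, is controlling the ratio $\tfrac{x^{k_j}-\bar{x}^{k_j}}{\alpha_{k_j}}$, because $\|x^k-\bar{x}^k\|\to 0$ gives no control over it when the stepsizes degenerate. Since $\alpha_k=\alpha_{k-1}\theta^{j(k)}$ with $\theta\in(0,1)$, the sequence $(\alpha_k)_{k\in\NN}$ is non-increasing and converges to some $\alpha_*\ge 0$, which gives a natural dichotomy. If $\alpha_*>0$, then $\tfrac{\|x^{k_j}-\bar{x}^{k_j}\|}{\alpha_{k_j}}\le\tfrac{1}{\alpha_*}\|x^{k_j}-\bar{x}^{k_j}\|\to 0$ and the conclusion follows immediately. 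If $\alpha_*=0$, I would exploit the failure of the backtracking test \eqref{linesearch} at the previous trial stepsize $\alpha_k/\theta$: letting $z^k:=J_{(\alpha_k/\theta)B}(x^k-(\alpha_k/\theta)Ax^k)$ denote the rejected iterate, the violated inequality together with the Cauchy--Schwarz inequality gives $\tfrac{\|x^k-z^k\|}{\alpha_k/\theta}<\tfrac{1}{\delta}\|A_2x^k-A_2z^k\|$. As $(x^k)_{k\in\NN}$ is bounded by Proposition~\ref{prop2}(ii) and uniform continuity sends bounded sets to bounded sets, $(z^k)$ stays bounded and $\|A_2x^k-A_2z^k\|$ stays bounded, so the displayed bound forces $\|x^k-z^k\|\to 0$; uniform continuity then yields $\|A_2x^k-A_2z^k\|\to 0$, and reinserting this shows $\tfrac{x^k-z^k}{\alpha_k}\to 0$, whence the $(A+B)$-element $\tfrac{x^k-z^k}{\alpha_k/\theta}-(Ax^k-Az^k)$ at $z^k$ tends to $0$ while $z^k\rightharpoonup\hat{x}$.

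The genuinely delicate technical point inside the vanishing-stepsize case is that the warm-started backtracking permits $j(k)=0$ even when $\alpha_k$ is already small, so the rejected iterate $z^k$ is available only along the (necessarily infinite) set of indices where a true reduction $j(k)\ge 1$ occurs; the argument must therefore be carried out along a subsequence of genuine reductions, and one must verify that this subsequence still carries the weak limit $\hat{x}$. Modulo this bookkeeping, Lemma~\ref{bound}(ii) applied either to $(\bar{x}^{k_j},\bar{w}^{k_j})$ in the nondegenerate case or to $z^k$ and its associated $(A+B)$-element in the degenerate case delivers $\hat{x}\in\zer(A+B)$, which is the assertion.
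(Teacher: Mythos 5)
Your proposal follows the paper's proof essentially step for step: the same derivation of $\|x^k-\bar x^k\|\to 0$ from Proposition~\ref{prop2}(iii) together with \eqref{desig-muy-usada}, the same dichotomy on the limit of the nonincreasing stepsize sequence, the same use in the degenerate case of the rejected trial point $J_{(\alpha_k/\theta)B}(x^k-(\alpha_k/\theta)Ax^k)$ and the violated inequality \eqref{linesearch}, and the same appeal to the weak--strong closedness of $\gr(A+B)$ from Lemma~\ref{bound}(ii). The only divergence is the ``bookkeeping'' point you flag and leave open: the violated line-search inequality at stepsize $\alpha_{i_k}/\theta$ is indeed available only when $j(i_k)\ge 1$, and the paper's case (b) asserts it without verifying this, so the step you declare unresolved is one the paper passes over silently rather than one it actually closes.
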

\begin{proof}
Using Proposition \ref{prop2}(iii) and taking limits in \eqref{desig-muy-usada}, we have 
\begin{align*}\label{limite}
0&=\lim_{k\to \infty}\Big\langle x^{k}-\bar{x}^{k}-\alpha_{k}(A_2x^{k}-A_2\bar{x}^{k}),x^{k}-\bar{x}^{k}\Big\rangle-\bar\delta\|x^k-\bar x^k\|^2\\&\ge \lim_{k\to\infty}  (1-\delta-\bar\delta)\|x^{k}-\bar{x}^{k}\|^2\ge 0. \end{align*}
Implying that,
\begin{equation}
\lim_{k\to\infty}\|x^{k}-\bar{x}^{k}\|=0.
\label{limnormzero} 
\end{equation} 
Proposition \ref{prop2}(ii) guarantees the existence of weak accumulation points of $(x^k)_{k\in\NN}$.  Let $\hat{x}$ be any weak accumulation point of $(x^k)_{k\in\NN}$ and assume that $(x^{i_k})_{k\in\NN}$ is any subsequence of $(x^k)_{k\in\NN}$ that converges weakly to $\hat{x}$ and also, without loss of generality, assume that $\lim_{k\to\infty}\alpha_{i_k}=\bar{\alpha}$. 
Hence, it follows from \eqref{limnormzero} that $(\bar x^{i_k})_{k\in\NN}$ converges weakly to $\hat x$ as well.

In the following, we can consider two cases.

\noindent {\bf (a)} Assume that $\bar{\alpha}> 0$. Hence, \eqref{limnormzero} implies
\begin{equation}\label{limcero}
\lim_{k\to \infty}\frac{\|x^{i_k}-\bar{x}^{i_k}\|}{\alpha_{i_k}}\le\frac{1}{\bar \alpha}\cdot\lim_{k\to \infty}\|x^{i_k}-\bar{x}^{i_k}\|=0,
\end{equation} because the sequence $(\alpha_k)_{k\in\NN}$ is a nonincreasing sequence.  
Notice that, $$\bar{w}^{i_k}:=\frac{x^{i_k}-\bar{x}^{i_k}}{\alpha_{i_k}}-(A_1x^{i_k}-A_1\bar{x}^{i_k})-(A_2x^{i_k}-A_2\bar{x}^{i_k})\in (A_1+A_2+B)\bar{x}^{i_k},$$ which is equivalent to $(\bar{x}^{i_k},\bar{w}^{i_k})\in \gr(A_1+A_2+B)$. The fact that $\bar{w}^{i_k}\to 0$ follows by using Cauchy-Schwartz, \eqref{limcero}, \eqref{limnormzero}, the uniformly continuity assumption of $A_2$ and the cocoercivity of $A_1$. Since $\bar{w}^{i_k}\to 0$ and $\bar{x}^{i_k}\rightharpoonup\hat{x}$ then the closedness of $\gr (A_1+A_2+B)$ implies that $(\hat{x},0)\in \gr(A_1+A_2+B)$, and therefore $\hat{x}\in \zer(A_1
+A_2+B).$

\noindent {\bf (b)} Assume that $\bar\alpha=0$, and the proof will be similar to the proof of Proposition \ref{unifm-cont-welldefined}. For simplicity, define $\hat{\alpha}_{i_k}\displaystyle:=\frac{\alpha_{i_k}}{\theta}.$ Then, because $\theta \in (0,1)$, we have that $\hat{\alpha}_{i_k}>{\alpha}_{i_k}$ and \begin{equation}\label{hatato0}\lim_{k\to\infty}\hat{\alpha}_{i_k}=0.\end{equation}
Define \begin{equation}\label{hatxk}\hat{x}^{i_k}:=(I+\hat{\alpha}_{i_k}B)^{-1}(x^{i_k}-\hat{\alpha}_{i_k}Ax^{i_k})\end{equation} and for all $k\in\NN$. Hence,  
$
\hat\alpha_{i_k}\langle A_2x^{i_k}-A_2\hat{x}^{i_k},x^{i_k}-\hat{x}^{i_k}\rangle > \delta\|x^{i_k}-\hat{x}^{i_k}\|^2.
$ Using the Cauchy-Schwartz inequality, we obtain $$\delta\|x^{i_k}-\hat{x}^{i_k}\|^2 < \hat\alpha_{i_k} \|A_2x^{i_k}-A_2\hat{x}^{i_k}\|\|x^{i_k}-\hat{x}^{i_k}\|.$$ 
Dividing by $\|x^{i_k}-\hat{x}^{i_k}\|$, we get 
\begin{align}
\delta\|x^{i_k}-\hat{x}^{i_k}\| <\hat\alpha_{i_k} \|A_2x^{i_k}-A_2\hat{x}^{i_k}\|.
\label{alphaAzero}
\end{align}
Letting $k\to +\infty$, and since $(\|A_2x^{i_k}-A_2\hat{x}^{i_k}\|)_{k\in\NN}$ is bounded and \eqref{hatato0}, we have $\|x^{i_k}-\hat{x}^{i_k}\|\to 0$. Hence, $\hat{x}^{i_k}\rightharpoonup \hat x$. Since $A_2$ is uniformly continuous then \begin{equation}\label{xk-barxkto0*}\|A_2x^{i_k}-A_2\hat{x}^{i_k}\|\to 0.\end{equation} Using again \eqref{alphaAzero}, \begin{equation}\label{Akto0*}\displaystyle\frac{\|x^{i_k}-\hat{x}^{i_k}\|}{\hat\alpha_{i_k}}\to 0.\end{equation} 
The definition of $\hat x^{i_k}$ in \eqref{hatxk} together with \eqref{fermat} imply 
$$\hat{w}^{i_k}:=\displaystyle\frac{x^k-\hat{x}^{i_k}}{\hat \alpha_{i_k}}-(A_1x^{i_k}-A_1\hat{x}^{i_k})-(A_2x^{i_k}-A_2\hat{x}^{i_k})\in(A_1+A_2+B)\hat{x}^{i_k},$$ or equivalently, $$(\hat{x}^{i_k},\hat{w}^{i_k})\in \gr(A_1+A_2+B).$$ Observe that 
\begin{align*}\|\hat{w}^{i_k}\|&=\left\|\displaystyle\frac{x^{i_k}-\hat{x}^{i_k}}{\hat\alpha_{{i_k}}}-(A_1x^{i_k}-A_1\hat{x}^{i_k})-(A_2x^{i_k}-A_2\hat{x}^{i_k})\right\|\\&\le \displaystyle\frac{\|x^{i_k}-\bar{x}^{i_k}\|}{\hat\alpha_{{i_k}}}+\|A_1x^{i_k}-A_1\bar{x}^{i_k}\|+\|A_2x^{i_k}-A_2\bar{x}^{i_k}\|\\&\le \displaystyle\frac{\|x^{i_k}-\bar{x}^{i_k}\|}{\hat\alpha_{{i_k}}}+\beta\|x^{i_k}-\bar{x}^{i_k}\|+\|A_2x^{i_k}-A_2\bar{x}^{i_k}\|\end{align*}
Hence,   $\hat{w}^{i_k}$ converges to $0$ by using \eqref{Akto0*}, \eqref{limnormzero} and \eqref{xk-barxkto0*}. Since $\hat{x}^{i_k}\rightharpoonup \hat x$ and $\hat{w}^{i_k}\to 0$ and by the sequentially closedness in the weak-strong topology  of $\gr (A_1+A_2+B)$ (Lemma \ref{bound}(ii)), we have $$(\hat x,0)\in \gr(A_1+A_2+B),$$ or  equivalently,
$0\in(A_1+A_2+B)\hat x.$ Then, $\hat x\in \zer(A_1+A_2+B)$. Then, all weak accumulation points of $(x^{k})_{k\in\NN}$ belong to $\zer(A_1+A_2+B)$.
\end{proof}
\begin{theorem}\label{teo1}
The generated sequence $(x^k)_{k\in\NN}$ by {\bf Method 1} converges weakly to some element belonging to the optimal solutions set $\zer(A_1+A_2+B)$.
\end{theorem}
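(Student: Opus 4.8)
The plan is to assemble the facts already established for \textbf{Method 1} and invoke the abstract Fej\'er convergence principle of Proposition \ref{punto}. The guiding observation is that weak convergence of a Fej\'er monotone sequence is automatic once one knows that every weak accumulation point lies in the target set; both ingredients are now in hand, so the theorem should follow as a direct corollary with essentially no new computation.

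First I would recall from Proposition \ref{prop2}(i) that the sequence $(x^k)_{k\in\NN}$ generated by \textbf{Method 1} is Fej\'er convergent to $\zer(A+B)$, which is nonempty by the standing assumption imposed before Proposition \ref{unifm-cont-welldefined}. By Proposition \ref{punto}(i) this already forces $(x^k)_{k\in\NN}$ to be bounded (equivalently Proposition \ref{prop2}(ii)), so by weak sequential compactness of bounded subsets of the Hilbert space $\HH$ the set of weak accumulation points of $(x^k)_{k\in\NN}$ is nonempty. Next I would bring in Theorem \ref{teo1*}, which asserts precisely that every such weak accumulation point belongs to $\zer(A+B)$.

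With Fej\'er convergence to $\zer(A+B)$ established together with the inclusion of all weak accumulation points in $\zer(A+B)$, the two hypotheses of Proposition \ref{punto}(iii) are met verbatim, and that proposition yields weak convergence of the entire sequence $(x^k)_{k\in\NN}$ to some $x^*\in\zer(A+B)$, which is the claim. There is no genuine obstacle at this stage: the only point requiring care is bookkeeping, namely confirming that the hypotheses of Proposition \ref{punto}(iii) coincide exactly with what Proposition \ref{prop2}(i) and Theorem \ref{teo1*} supply. The substantive analytic work was already carried out in Theorem \ref{teo1*}, where the limit $\lim_{k\to\infty}\|x^k-\bar x^k\|=0$ from \eqref{limnormzero} was combined with the weak-strong sequential closedness of $\gr(A+B)$ from Proposition \ref{bound}(ii); here we merely harvest that conclusion.
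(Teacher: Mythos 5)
Your argument is correct and coincides with the paper's own proof: both derive the conclusion by combining the Fej\'er monotonicity from Proposition \ref{prop2}(i), the optimality of all weak accumulation points from Theorem \ref{teo1*}, and the abstract convergence principle of Proposition \ref{punto}(iii). No gaps; the extra remarks on boundedness and nonemptiness of the set of weak accumulation points are harmless elaborations of what the paper leaves implicit.
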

\begin{proof}
The result follows from the Fej\'er monotonicity of $(x^k)_{k\in\NN}$ to $\zer(A_1+A_2+B)$ proved in Proposition \ref{prop2}(i), the optimality of the weak accumulation points showed in Theorem \ref{teo1*} and Proposition \ref{punto}(iii).
\end{proof}
\subsection{Convergence of Method 2}
In this subsection all results are referent to {\bf Method 2}, i.e., with the iterative step \eqref{Fk} in {\bf Step 2.} as 
$$x^{k+1}=\mathcal{F}(x^k,\bar x^k)=\mathcal{F}_2(x^k,\bar x^k)=\Pi_{\Tau_k\cap \Gamma_k}(x^0).$$
Provided that, 
\begin{align*}x^{k+1}&=\Pi_{\Tau_k\cap\Gamma_k}(x^0)=x^0-\lambda^k_1\bar{w}^k-\lambda^k_2(x^0-x^k)\\&=(1-\lambda^k_2)(x^0-x^k)+\left(1-\frac{\lambda^k_1}{\alpha_k}\right)x^k+\frac{\lambda^k_1}{\alpha_k}\bar x^k-\lambda^k_1\left[A_2\bar x^k-A_2x^k\right].\end{align*}
where $\lambda^k_1$ and $\lambda^k_2$ are given by \eqref{l1} and \eqref{l2}. This forward step is still more general than the forward step of {\bf Method 1} and it is interesting on its own. Actually, if $\lambda^k_1=\lambda_k$ given in \eqref{beta-k} and $\lambda^k_2=1$ then it recovers the projection-forward step of {\bf Method 1}. Moreover, the projection forward step for {\bf Method 2} can be replaced by under and over projections (or even inexact projections) onto $\Tau_k\cap \Gamma_k$ producing strongly convergent versions of {\bf Method 2}. 
\begin{proposition}
If {\bf Method 2} stops then $x^k\in \zer(A_1+A_2+B)$.
\end{proposition}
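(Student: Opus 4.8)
The plan is to mimic the argument already used for Proposition~\ref{stop1}, since the only structural difference between the two methods is that the forward step $\mathcal{F}_2$ projects the anchor point $x^0$ (rather than $x^k$) and projects onto the smaller set $\Tau_k\cap\Gamma_k$ instead of $\Tau_k$. The crucial observation is completely elementary: a metric projection onto a nonempty closed convex set always lands inside that set, so whatever $\mathcal{F}_2(x^k)$ equals, it must lie in $\Tau_k\cap\Gamma_k$.

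First I would record that if the Stop Criterion is triggered at iteration $k$, then by \eqref{iterado3} we have $x^k=x^{k+1}=\mathcal{F}_2(x^k)=\Pi_{\Tau_k\cap\Gamma_k}(x^0)$. Because the orthogonal projection $\Pi_{\Tau_k\cap\Gamma_k}(x^0)$ necessarily belongs to $\Tau_k\cap\Gamma_k$, this forces $x^k\in\Tau_k\cap\Gamma_k$, and in particular $x^k\in\Tau_k$.

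Next I would invoke Proposition~\ref{H<=>S^*}, which asserts that $x^k\in\Tau_k$ holds if and only if $x^k\in\zer(A+B)$. Applying this equivalence to the inclusion $x^k\in\Tau_k$ just obtained yields $x^k\in\zer(A+B)$, which is the desired conclusion. Thus the whole proof reduces to a one-line chain of implications, exactly parallel to the Method~1 case.

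The only point requiring care is that the forward step $\mathcal{F}_2(x^k)=\Pi_{\Tau_k\cap\Gamma_k}(x^0)$ is genuinely well-defined, i.e.\ that $\Tau_k\cap\Gamma_k$ is a nonempty, closed and convex set so that its projection operator is single-valued. Closedness and convexity are immediate, since both $\Tau_k$ (via \eqref{H(x)}) and $\Gamma_k$ (via \eqref{W(x)}) are halfspaces; nonemptiness follows from Lemma~\ref{S*subsetH}(ii) together with the inclusion $\zer(A+B)\subseteq\Gamma_k$ established in the convergence analysis of \textbf{Method 2}, which places the nonempty solution set inside $\Tau_k\cap\Gamma_k$. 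Granting this bookkeeping, the projection exists and the short argument above goes through unobstructed; I do not expect any substantive obstacle beyond verifying this well-definedness.
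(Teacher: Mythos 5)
Your argument is correct and is essentially identical to the paper's own proof: the stop criterion gives $x^k=\Pi_{\Tau_k\cap\Gamma_k}(x^0)\in\Tau_k\cap\Gamma_k\subset\Tau_k$, and Proposition~\ref{H<=>S^*} then yields $x^k\in\zer(A+B)$. Your extra remark on well-definedness of the projection is sound and matches what the paper establishes separately in Lemma~\ref{lemma:3}.
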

\begin{proof}
If {\bf Method 2} stops then $x^{k+1}=\Pi_{\Tau_k\cap \Gamma_k}(x^0)=x^k$. So, $x^k\in \Tau_k\cap \Gamma_k\subset \Tau_k$ and it implies that, by using Proposition \ref{H<=>S^*}, $x^k\in \zer(A_1+A_2+B)$.
\end{proof}

To avoid this case, we may assume that {\bf Method 2} does not stop. Note that, $\Gamma_k$ and $\Tau_k$ are convex and closed halfspaces, for all $k$. Therefore $\Tau_k\cap \Gamma_k$ is a convex and closed set. So, if $\Tau_k\cap \Gamma_k$ is nonempty, then $x^{k+1}$ is well-defined. This notion is confirmed using the following lemma.
\begin{lemma}\label{lemma:3} $\zer(A_1+A_2+B)\subseteq \Tau_k\cap \Gamma_k$, for all $k\in\NN$.
\end{lemma}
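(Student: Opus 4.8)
The plan is to split the claim into its two halves. The inclusion $\zer(A+B)\subseteq\Tau_k$ is already available from Lemma \ref{S*subsetH}(ii) and holds for every $k\in\NN$, so the entire burden reduces to showing that $\zer(A+B)\subseteq\Gamma_k$ for all $k\in\NN$. I would establish this by induction on $k$, and in the same stroke obtain the well-definedness of the projection $x^{k+1}=\Pi_{\Tau_k\cap\Gamma_k}(x^0)$: the induction guarantees that $\Tau_k\cap\Gamma_k$ contains the (nonempty, by standing assumption) set $\zer(A+B)$, hence is itself nonempty, so the projection of $x^0$ onto this closed convex set exists and is unique.

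For the base case $k=0$, the definition \eqref{W(x)} gives $\Gamma_0=\menge{x\in\HH}{\la x^0-x^0,x-x^0\ra\le 0}=\HH$, so $\zer(A+B)\subseteq\Gamma_0$ trivially. For the inductive step, suppose $\zer(A+B)\subseteq\Tau_k\cap\Gamma_k$. Then $\Tau_k\cap\Gamma_k$ is a nonempty closed convex set, so $x^{k+1}=\Pi_{\Tau_k\cap\Gamma_k}(x^0)$ is well-defined; fix any $x^*\in\zer(A+B)$, which by the inductive hypothesis satisfies $x^*\in\Tau_k\cap\Gamma_k$.

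The key step is then the variational (obtuse-angle) characterization of the metric projection, Proposition \ref{proj}(ii): applied to the closed convex set $C=\Tau_k\cap\Gamma_k$, the point $x^0$, and the element $z=x^*\in C$, it yields $\la x^0-x^{k+1},x^*-x^{k+1}\ra\le 0$. Recognizing the left-hand side as precisely the defining inequality of $\Gamma_{k+1}$ in \eqref{W(x)} (with index $k+1$, since $x^{k+1}=\Pi_{\Tau_k\cap\Gamma_k}(x^0)$), this says exactly $x^*\in\Gamma_{k+1}$. Combining with $x^*\in\Tau_{k+1}$ from Lemma \ref{S*subsetH}(ii) closes the induction and delivers $\zer(A+B)\subseteq\Tau_{k+1}\cap\Gamma_{k+1}$.

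I do not expect any genuine obstacle here: this is the classical argument in which each new halfspace $\Gamma_{k+1}$ is manufactured to be the supporting halfspace separating $x^0$ from the previous feasible set, so that it automatically contains that set and hence $\zer(A+B)$. The only point requiring care is the interlocking of the two conclusions — the containment $\zer(A+B)\subseteq\Gamma_{k+1}$ and the nonemptiness needed to define $x^{k+1}$ — which is why the induction must carry the full statement $\zer(A+B)\subseteq\Tau_k\cap\Gamma_k$ rather than the $\Gamma_k$-inclusion alone.
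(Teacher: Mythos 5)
Your proof is correct and follows essentially the same route as the paper's: reduce to the $\Gamma_k$-inclusion via Lemma \ref{S*subsetH}(ii), induct on $k$ with base case $\Gamma_0=\HH$, and use the obtuse-angle characterization of the projection (Proposition \ref{proj}(ii)) to place each $x^*\in\zer(A+B)$ in $\Gamma_{k+1}$. Your explicit remark that the induction must carry nonemptiness of $\Tau_k\cap\Gamma_k$ along with the containment is exactly the point the paper's argument relies on as well.
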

\begin{proof} Note that, the set of optimal solutions $\zer(A_1+A_2+B)\neq\emptyset$. By Proposition \ref{S*subsetH}, $\zer(A_1+A_2+B)\subseteq \Tau_k$, for all $k\in\NN$.
When $k=0$, it is the case $\Tau_0=\HH$, we get $\zer(A_1+A_2+B)\subset \Tau_0\cap \Gamma_0$.
For all $\ell\in\NN$ such that $\ell\leq k$, we have by induction hypothesis that $\zer(A_1+A_2+B)\subset \Tau_\ell\cap \Gamma_\ell$. Therefore, $x^{k+1}=\Pi_{\Tau_k\cap \Gamma_k}(x^0)$ is well-defined.
Then, the following inequality is consequence of the induction hypothesis and by Proposition \ref{proj}(ii), i.e.,
\begin{equation}\label{x*ink+1}
\langle x^*-x^{k+1},x^0-x^{k+1}\rangle=\langle x^*-\Pi_{\Tau_k\cap \Gamma_k}(x^0),x^0-\Pi_{\Tau_k\cap \Gamma_k}(x^0)\rangle\leq0,
\end{equation}
for all $x^*\in \zer(A_1+A_2+B)$. Notice that \eqref{x*ink+1} and \eqref{W(x)} imply that
$x^*\in \Gamma_{k+1}$ and hence, $ \zer(A_1+A_2+B)\subset \Tau_{k+1}\cap \Gamma_{k+1}$. So, the result follows.
\end{proof}

We have shown that the set $\Tau_k\cap \Gamma_k$ is  nonempty and therefore the projection step, given in  \eqref{iterado3}, is well-defined.
\begin{corollary}\label{l:well-definedness}  {\bf Method 2} is well-defined.
\end{corollary}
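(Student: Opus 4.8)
The plan is to reduce the well-definedness of \textbf{Method 2} to Lemma \ref{lemma:3}. Recall that being well-defined means precisely that, at each iteration $k$, the iterate $x^{k+1}=\Pi_{\Tau_k\cap\Gamma_k}(x^0)$ prescribed by \eqref{iterado3} exists and is uniquely determined, so that the algorithm produces a genuine infinite sequence $(x^k)_{k\in\NN}$ (under the standing assumption that it does not stop). Thus the only thing to verify is that each $\Tau_k\cap\Gamma_k$ is a nonempty closed convex set, since the metric projection of the fixed point $x^0$ onto such a set then exists and is single valued.

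First I would observe that, directly from their defining inequalities \eqref{H(x)} and \eqref{W(x)}, both $\Tau_k$ and $\Gamma_k$ are halfspaces of $\HH$; in particular each is closed and convex, and hence so is their intersection $\Tau_k\cap\Gamma_k$. Next, I would invoke the standing hypothesis that $\zer(A+B)\neq\emp$ together with Lemma \ref{lemma:3}, which gives $\emp\neq\zer(A+B)\subseteq\Tau_k\cap\Gamma_k$ for every $k\in\NN$; this is exactly the nonemptiness required. Finally, since the orthogonal projection onto a nonempty closed convex subset of the Hilbert space $\HH$ exists and is single valued (the property underlying Proposition \ref{proj}), the point $\Pi_{\Tau_k\cap\Gamma_k}(x^0)$ is well-defined. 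Because Lemma \ref{lemma:3} already carries out the induction on $k$ that keeps $\zer(A+B)$ inside every $\Tau_k\cap\Gamma_k$, no separate induction is needed here, and the whole sequence generated by \textbf{Method 2} is well-defined.

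I do not expect any genuine obstacle: the substantive work, namely propagating the inclusion $\zer(A+B)\subseteq\Tau_k\cap\Gamma_k$ through the iteration using Proposition \ref{proj}(ii), was already done in Lemma \ref{lemma:3}. This corollary merely packages that inclusion together with the nonemptiness of the solution set and the classical existence-and-uniqueness theorem for projections onto nonempty closed convex sets, so the proof is short.
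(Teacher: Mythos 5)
Your proof is correct and follows essentially the same route as the paper: both invoke Lemma \ref{lemma:3} to get $\emp\neq\zer(A+B)\subseteq\Tau_k\cap\Gamma_k$ and then use that the projection of $x^0$ onto this nonempty closed convex set exists and is unique. Your version merely spells out the closedness/convexity and the projection theorem a bit more explicitly than the paper does.
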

\begin{proof} By Lemma \ref{lemma:3} , $ \zer(A_1+A_2+B)\subset \Tau_k\cap \Gamma_k$, for  all $k\in\NN$. Then,  for a given initial $x^0$, the sequence $(x^k)_{k\in\NN}$ is attainable.
\end{proof}

To prove the convergence of the sequence, we need first to show conditions and bounds on the sequence. Next lemma tackles this issue by restricting the sequence in a ball defined by its initial point $x^0$.
\begin{lemma}\label{seq-bdd} The sequence $(x^k)_{k\in\NN}$ is bounded. Moreover, 
\begin{equation*}\label{eq:bolas}
(x^k)_{k\in\NN}\subset  \mathbb{B}\left[\frac{1}{2}(x^0+\bar{x});\frac{1}{2}\rho\right],
\end{equation*} where $\bar{x}:=\Pi_{\zer(A_1+A_2+B)}(x^0)$ and $\rho:={\rm dist}(x^0,\zer(A_1+A_2+B))$. 
\end{lemma}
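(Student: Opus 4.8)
The plan is to reduce the ball-containment claim to a single application of the obtuse-angle characterization of the projection, Proposition \ref{proj}(ii). The starting observation is that $\mathbb{B}\left[\tfrac12(x^0+\bar x);\tfrac12\rho\right]$ is exactly the closed ball having the segment joining $x^0$ and $\bar x$ as a diameter: its center is the midpoint $\tfrac12(x^0+\bar x)$, and since $\rho=\|x^0-\bar x\|$ its radius $\tfrac12\rho$ equals half the length of that segment. Thales' characterization then gives
\[
y\in\mathbb{B}\left[\tfrac12(x^0+\bar x);\tfrac12\rho\right]\quad\Longleftrightarrow\quad \langle y-x^0,\,y-\bar x\rangle\le 0 .
\]
First I would verify this equivalence directly by expanding $\big\|y-\tfrac12(x^0+\bar x)\big\|^2-\tfrac14\|x^0-\bar x\|^2$ and checking that it equals $\langle y-x^0,\,y-\bar x\rangle$.

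With this reformulation, the task becomes showing $\langle x^k-x^0,\,x^k-\bar x\rangle\le 0$ for every $k\in\NN$. For $k=0$ this is just $0\le 0$, so $x^0$ lies on the boundary of the ball. For $k\ge 1$ I would invoke Lemma \ref{lemma:3}, which gives $\bar x\in\zer(A+B)\subseteq\Tau_{k-1}\cap\Gamma_{k-1}$, so that $\bar x$ is an admissible test point for the projection defining $x^{k}=\Pi_{\Tau_{k-1}\cap\Gamma_{k-1}}(x^0)$. Applying Proposition \ref{proj}(ii) with $C=\Tau_{k-1}\cap\Gamma_{k-1}$, $x=x^0$ and $z=\bar x$ yields $\langle x^0-x^{k},\,\bar x-x^{k}\rangle\le 0$, which is precisely $\langle x^{k}-x^0,\,x^{k}-\bar x\rangle\le 0$. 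Hence every $x^k$ lies in the stated ball.

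Boundedness then follows immediately, since the whole sequence sits inside a ball of radius $\tfrac12\rho<\infty$. I would also remark, as an easy byproduct, that the same setup gives the sharper estimate $\|x^{k}-x^0\|\le\rho$: because $x^{k}$ is the point of $\Tau_{k-1}\cap\Gamma_{k-1}$ nearest to $x^0$ and this set contains $\zer(A+B)$, we have $\|x^{k}-x^0\|\le\|\bar x-x^0\|=\rho$.

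The hard part here is conceptual rather than computational: recognizing that the prescribed ball is exactly the Thales ball on the diameter $[x^0,\bar x]$, which converts the geometric containment into the precise inequality furnished by the obtuse-angle property of projections. Once that identification is made, no delicate estimates are required, and the two sides of the equivalence line up term by term with the hypothesis and the conclusion of Proposition \ref{proj}(ii).
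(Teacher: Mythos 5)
Your proposal is correct and follows essentially the same route as the paper: the paper also derives $\langle x^{k+1}-x^0,\,x^{k+1}-\bar x\rangle\le 0$ from the obtuse-angle property of the projection (packaged there as ``$\bar x\in\Gamma_{k+1}$,'' which was obtained from Proposition \ref{proj}(ii) in Lemma \ref{lemma:3}) and then converts it into the ball containment by the same algebraic identity you call the Thales characterization, written in the paper via the shifted variables $x^*_k=x^k-\tfrac12(x^0+\bar x)$. Your presentation is a clean reorganization of the identical argument, including the byproduct bound $\|x^k-x^0\|\le\rho$, which the paper states first to get boundedness.
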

\begin{proof} 
Using Lemma \ref{lemma:3} we see that $\zer(A_1+A_2+B)\subseteq \Tau_k\cap \Gamma_k$, and by \eqref{iterado3}, we obtain that
\begin{equation*}
\| x^{k+1}-x^0\|\leq\|x^*-x^0\|,
\end{equation*}
for all $k\in\NN$ and all $x^*\in \zer(A_1+A_2+B)$. Taking $x^*=\bar{x}$ in the above inequality, we have
\begin{equation*}
\|x^{k+1}-x^0\|\leq\|\bar{x}-x^0\|=\rho,\quad\text{for all}\quad k\in\NN.
\end{equation*}
Proving that $(x^k)_{k\in\NN}$ is bounded. \\
Now, define $x^*_{k}:=x^{k}-\frac{1}{2}(x^0+\bar{x})$ and $\bar{x}^*:=\bar{x}-\frac{1}{2}(x^0+\bar{x})$. Since $\bar{x}\in \Gamma_{k+1}$, then we have 
\begin{align*}
0&\geq2\big\langle\bar{x}-x^{k+1},x^0-x^{k+1}\big\rangle \\&=2\left\langle\bar{x}^*+\frac{1}{2}(x^0+\bar{x})-x^*_{k+1} -\frac{1}{2}(x^0+\bar{x}),x^*_{0}+\frac{1}{2}(x^0+\bar{x})-x^*_{k+1}-\frac{1}{2}(x^0+\bar{x})\right\rangle\\&=2\left\langle\bar{x}^*-x^*_{k+1},x^*_{0}-x^*_{k}\right\rangle=\left\langle\bar{x}^*-x^*_{k+1},-\bar{x}^*-x^*_{k+1}\right\rangle=\|x^*_{k+1}\|^2-\|\bar{x}^*\|^2,
\end{align*}
Thus, after manipulating, we obtain 
\begin{equation*}\label{eq:raio}
\left\|x^{k+1}-\frac{x^0+\bar{x}}{2}\right\|\leq\left\|\bar{x}-\frac{x^0+\bar{x}}{2}\right\|=\frac{\rho}{2},\quad\text{for all}\quad k\in\NN.
\end{equation*}
Hence, the result follows and $(x^k)_{k\in\NN}$ lies in the ball determined by the initial point $x^0$.
\end{proof}

An important question arisen here is, whether the set of optimal solutions contains any weak accumulation point of $(x^k)_{k\in\NN}$ or not. Next lemma answers this question by showing that all weak accumulation points must lie in the optimal solutions set, $\zer(A_1+A_2+B)$.
\begin{lemma}\label{all-weak-accum} Let $(x^k)_{k\in\NN}$ be a sequence generated by {\bf Method 2}. Then, all weak accumulation points of the sequence $(x^k)_{k\in\NN}$ belong to $\zer(A_1+A_2+B)$.
\end{lemma}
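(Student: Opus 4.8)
The plan is to reduce the statement to the weak--strong closedness of $\gr(A+B)$, exactly as in the proof of Theorem \ref{teo1*}, once I have established the crucial estimate $\lim_{k\to\infty}\|x^k-\bar x^k\|=0$. The whole difficulty is concentrated in proving this limit: for {\bf Method 2} the iterate is the projection of the fixed anchor $x^0$ rather than a Fej\'er step, so I cannot invoke Proposition \ref{prop2}(iii), and I must instead recover the vanishing of $\|x^k-\bar x^k\|$ from the geometry of the sets $\Gamma_k$.

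First I would exploit the Haugazeau-type geometry of $\Gamma_k$. The halfspace $\Gamma_k=\menge{x\in\HH}{\la x^0-x^k,x-x^k\ra\le 0}$ is precisely the one whose metric projection of $x^0$ equals $x^k$; that is, $x^k=\Pi_{\Gamma_k}(x^0)$. Since $x^{k+1}=\Pi_{\Tau_k\cap\Gamma_k}(x^0)\in\Gamma_k$ and $\Tau_k\cap\Gamma_k\subseteq\Gamma_k$, comparing distances to a fixed point gives $\|x^k-x^0\|\le\|x^{k+1}-x^0\|$, so $(\|x^k-x^0\|)_{k\in\NN}$ is nondecreasing; by Lemma \ref{seq-bdd} it is bounded, hence convergent. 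Applying Proposition \ref{proj}(ii) to $x^k=\Pi_{\Gamma_k}(x^0)$ and $x^{k+1}\in\Gamma_k$ yields $\la x^0-x^k,x^{k+1}-x^k\ra\le 0$, and expanding $\|x^{k+1}-x^0\|^2$ produces the Pythagorean bound $\|x^{k+1}-x^k\|^2\le\|x^{k+1}-x^0\|^2-\|x^k-x^0\|^2$. As the right-hand side telescopes to zero, I conclude $\|x^{k+1}-x^k\|\to 0$.

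Next I would transfer this to $\|x^k-\bar x^k\|$ through the membership $x^{k+1}\in\Tau_k$. Writing $\bar w_2^k=\tfrac{x^k-\bar x^k}{\alpha_k}-(A_2x^k-A_2\bar x^k)$ and splitting $x^{k+1}-\bar x^k=(x^{k+1}-x^k)+(x^k-\bar x^k)$ in the defining inequality $\la\bar w_2^k,x^{k+1}-\bar x^k\ra\le r_k$, then using Proposition \ref{coro} to bound $\la\bar w_2^k,x^k-\bar x^k\ra$ below by $\tfrac{1-\delta}{\alpha_k}\|x^k-\bar x^k\|^2$, I obtain after multiplying through by $\alpha_k$ that
\[
(1-\delta-\bar\delta)\|x^k-\bar x^k\|^2\le \big\langle (x^k-\bar x^k)-\alpha_k(A_2x^k-A_2\bar x^k),\,x^k-x^{k+1}\big\rangle.
\]
The vector on the left of the inner product is norm-bounded, since $(x^k)_{k\in\NN}$ and $(\bar x^k)_{k\in\NN}$ are bounded, $\alpha_k\le\alpha_{-1}$, and the uniformly continuous map $A_2$ sends bounded sets to bounded sets; combined with $\|x^k-x^{k+1}\|\to 0$, the Cauchy--Schwarz inequality and $1-\delta-\bar\delta>0$ force $\|x^k-\bar x^k\|\to 0$.

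Finally, with $\|x^k-\bar x^k\|\to 0$ available, the argument is identical to that of Theorem \ref{teo1*}. Let $\hat x$ be a weak accumulation point along $x^{i_k}\rightharpoonup\hat x$ with $\alpha_{i_k}\to\bar\alpha$; then $\bar x^{i_k}\rightharpoonup\hat x$ as well. If $\bar\alpha>0$ I would form $\bar w^{i_k}\in(A+B)\bar x^{i_k}$ and show $\bar w^{i_k}\to 0$ using the cocoercivity of $A_1$ and the uniform continuity of $A_2$, then invoke the weak--strong closedness of $\gr(A+B)$ from Proposition \ref{bound}(ii). If $\bar\alpha=0$ I would repeat the failed-backtracking analysis of Proposition \ref{unifm-cont-welldefined} with the test stepsize $\hat\alpha_{i_k}=\alpha_{i_k}/\theta$ to again produce a sequence in $\gr(A+B)$ converging strongly to $0$ at $\hat x$. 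In both cases $(\hat x,0)\in\gr(A+B)$, that is, $\hat x\in\zer(A+B)$. The main obstacle is the transfer step of the third paragraph: it is exactly what distinguishes the best-approximation scheme from {\bf Method 1}, and it hinges on recognizing the $\Gamma_k$ as Haugazeau halfspaces so that $\|x^{k+1}-x^k\|\to 0$ can serve as the replacement for Fej\'er monotonicity.
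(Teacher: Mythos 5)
Your proposal is correct and follows essentially the same route as the paper: use $x^{k+1}\in\Gamma_k$ (equivalently $x^k=\Pi_{\Gamma_k}(x^0)$) to get monotonicity of $\|x^k-x^0\|$ and hence $\|x^{k+1}-x^k\|\to 0$, transfer this through $x^{k+1}\in\Tau_k$ to obtain $\|x^k-\bar x^k\|\to 0$, and then conclude via the graph-closedness argument of Theorem \ref{teo1*}. The only (harmless) difference is in the transfer step, where you apply Cauchy--Schwarz directly to the halfspace inequality defining $\Tau_k$ instead of passing through the explicit formula for $\|x^k-\Pi_{\Tau_k}(x^k)\|$ as the paper does.
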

\begin{proof}
If $x^{k+1}\in \Gamma_k$, then 
\begin{align*}
0&\ge 2\la x^{k+1}-x^k,x^0-x^k\ra = \|x^{k+1}-x^k\|^2-\|x^{k+1}-x^0\|^2+\|x^k-x^0\|^2.
\end{align*}
which implies $0\leq\|x^{k+1}-x^k\|^2\leq\|x^{k+1}-x^0\|^2 -\|x^k-x^0\|^2.$ Hence, $(\|x^k-x^0\|)_{k\in\NN}$ is a nondecreasing sequence. Since the sequence $(\|x^k-x^0\|)_{k\in\NN}$ is bounded, by Lemma \ref{seq-bdd}, it is therefore convergent. Thus,
\begin{equation}\label{xk+1-xk-to-zero}
\lim_{k\rightarrow\infty}\| x^{k+1}-x^k\|=0.
\end{equation}
Since the projection of $x^k$ onto the halfspace $\Tau_k$ is denoted by $\Pi_{\Tau_k}(x^k)$, then we have the inequality $$0\leq\|x^k-\Pi_{\Tau_k}(x^k)\|\leq\|x^k-x\|\qquad\text{for all}\quad x\in \Tau_k.$$
The fact that $x^{k+1}\in \Tau_k$ implies $0\leq\|x^k-\Pi_{\Tau_k}(x^k)\|\leq\|x^k-x^{k+1}\|$. Since  $\|x^k-x^{k+1}\|\to 0$ by \eqref{xk+1-xk-to-zero}, we have
\begin{align*}
0&=\lim_{k\rightarrow\infty}\|x^k-\Pi_{\Tau_k}(x^k)\|=\lim_{k\to\infty}\frac{\Big\langle\displaystyle\frac{x^k-\bar{x}^k}{\alpha_k}-(A_2x^k-A_2\bar{x}^k),x^k-\bar{x}^k\Big\rangle-\frac{\bar\delta}{\alpha_k}\|x^k-\bar x^k\|^2}{\left\|\displaystyle\frac{x^k-\bar{x}^k}{\alpha_k}-(A_2x^k-A_2\bar{x}^k)\right\|}\\&=\lim_{k\to\infty}\frac{\Big\langle x^k-\bar{x}^k-{\alpha_k}(A_2x^k-A_2\bar{x}^k),x^k-\bar{x}^k\Big\rangle-\bar\delta\|x^k-\bar x^k\|^2}{\|\displaystyle x^k-\bar{x}^k-{\alpha_k}(A_2x^k-A_2\bar{x}^k)\|}.
\end{align*}
The sequence $\left(\|\displaystyle x^k-\bar{x}^k-{\alpha_k}(A_2x^k-A_2\bar{x}^k)\|\right)_{k\in\NN}$ is bounded because the sequence $(x^k)_{k\in\NN}$ and $(\bar x^k)_{k\in\NN}$ are bounded and 
$
\|\displaystyle x^k-\bar{x}^k-{\alpha_k}(A_2x^k-A_2\bar{x}^k)\|\le \|\displaystyle x^k-\bar{x}^k\|+{\alpha_k}\|A_2x^k-A_2\bar{x}^k\|.
$
Hence, 
\begin{equation*}
0=\lim_{k\to\infty}\langle x^k-\bar{x}^k-{\alpha_k}(A_2x^k-A_2\bar{x}^k),x^k-\bar{x}^k\rangle-\bar\delta\|x^k-\bar x^k\|^2\ge (1-\delta-\bar\delta) \lim_{k\to\infty} \|x^k-\bar x^k\|^2\ge0,
\end{equation*} using \eqref{linesearch} in the first inequality. Hence, 
$
\lim_{k\to \infty}\|x^{k} - \bar x^{k}\|=0.
$
Now the rest of the proof follows similarly to the proof of Theorem \ref{teo1*} after \eqref{limnormzero}.
\end{proof}

Finally, we prove the main theorem of convergence of the sequence $(x^k)_{k\in\NN}$ generated by {\bf Method 2}, which converges to the nearest solution to $x^0$.
\begin{theorem} Let $(x^k)_{k\in\NN}$ be a sequence generated by {\bf Method 2}. Define $\bar{x}:=\Pi_{\zer(A_1+A_2+B)}(x^0)$. Then, $(x^k)_{k\in\NN}$ converges strongly to $\bar{x}$.
\end{theorem}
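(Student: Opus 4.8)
The plan is to run the standard Haugazeau-type outer-approximation argument, exploiting that $x^{k+1}$ is the metric projection of the \emph{fixed} anchor $x^0$ onto $\Tau_k\cap\Gamma_k$. First I would collect the two monotonicity facts already in hand. By the computation at the beginning of the proof of Lemma~\ref{all-weak-accum} (using $x^{k+1}\in\Gamma_k$), the scalar sequence $\big(\|x^k-x^0\|\big)_{k\in\NN}$ is nondecreasing; and by Lemma~\ref{seq-bdd}---or directly from $\bar{x}\in\zer(A+B)\subseteq\Tau_k\cap\Gamma_k$ together with $x^{k+1}=\Pi_{\Tau_k\cap\Gamma_k}(x^0)$---it is bounded above by $\rho:=\|\bar{x}-x^0\|=\dist(x^0,\zer(A+B))$. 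Hence $\big(\|x^k-x^0\|\big)_{k\in\NN}$ converges to some $\ell\le\rho$.

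Next I would pin down the weak limit. The sequence is bounded (Lemma~\ref{seq-bdd}), so it has weak accumulation points; let $\hat{x}$ be one, say $x^{i_k}\weakly\hat{x}$. By Lemma~\ref{all-weak-accum}, $\hat{x}\in\zer(A+B)$, whence $\|\hat{x}-x^0\|\ge\dist(x^0,\zer(A+B))=\rho$. On the other hand, weak lower semicontinuity of the norm gives $\|\hat{x}-x^0\|\le\liminf_{k\to\infty}\|x^{i_k}-x^0\|=\ell\le\rho$. Combining these forces $\ell=\rho$ and $\|\hat{x}-x^0\|=\rho$; since $\zer(A+B)$ is closed and convex, $\bar{x}$ is its \emph{unique} point at distance $\rho$ from $x^0$, so $\hat{x}=\bar{x}$. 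As every weak accumulation point equals $\bar{x}$ and the sequence is bounded, the whole sequence converges weakly, $x^k\weakly\bar{x}$, and moreover $\|x^k-x^0\|\to\rho=\|\bar{x}-x^0\|$.

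Finally I would upgrade weak to strong convergence via the Hilbert-space identity applied to $x^k-x^0$: $\|x^k-\bar{x}\|^2=\|x^k-x^0\|^2-2\la x^k-x^0,\bar{x}-x^0\ra+\|\bar{x}-x^0\|^2$. Letting $k\to\infty$, the first term tends to $\rho^2$, the inner product tends to $\|\bar{x}-x^0\|^2=\rho^2$ because $x^k\weakly\bar{x}$, and the last term is $\rho^2$; hence $\|x^k-\bar{x}\|^2\to\rho^2-2\rho^2+\rho^2=0$, which is exactly the Kadec--Klee fact that in a Hilbert space weak convergence plus convergence of norms yields strong convergence. I expect the only genuinely delicate point to be the simultaneous squeeze of the second step, where the minimality defining $\rho$ (giving $\|\hat{x}-x^0\|\ge\rho$) must be played against the projection-induced upper bound $\ell\le\rho$ so that weak lower semicontinuity of the norm leaves no room for a strict inequality; once $\|x^k-x^0\|\to\rho$ is secured, the passage to strong convergence is immediate.
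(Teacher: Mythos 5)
Your proposal is correct, and it rests on the same pillars as the paper's proof (Lemma \ref{seq-bdd}, Lemma \ref{all-weak-accum}, and the inequality $\|x^{k+1}-x^0\|\le\|\bar x-x^0\|$ coming from $\bar x\in\zer(A+B)\subseteq\Tau_k\cap\Gamma_k$ and $x^{k+1}=\Pi_{\Tau_k\cap\Gamma_k}(x^0)$), but the closing mechanism is genuinely different. The paper never identifies the limit of $\|x^k-x^0\|$; instead it takes an arbitrary weakly convergent subsequence $x^{i_k}\weakly\hat x\in\zer(A+B)$, expands $\|x^{i_k}-\bar x\|^2$, and bounds the limsup by $2\la\bar x-\hat x,\bar x-x^0\ra\le 0$ using the variational characterization of the projection (Proposition \ref{proj}(ii)); this yields strong convergence of the subsequence to $\bar x$ in a single computation, without ever invoking the monotonicity of $(\|x^k-x^0\|)_{k\in\NN}$ or the Radon--Riesz property. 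You instead first squeeze $\dist(x^0,\zer(A+B))\le\|\hat x-x^0\|\le\ell\le\rho$ to identify $\hat x=\bar x$ via \emph{uniqueness} of the nearest point, obtain weak convergence of the whole sequence together with $\|x^k-x^0\|\to\rho$, and then upgrade to strong convergence by the Kadec--Klee argument. Your route buys a cleaner separation of ``identify the weak limit'' from ``upgrade to strong convergence'' and uses only the metric (minimizing-distance) characterization of $\bar x$ plus weak lower semicontinuity of the norm, at the cost of needing the monotone convergence of $(\|x^k-x^0\|)_{k\in\NN}$ (which the paper establishes inside Lemma \ref{all-weak-accum}, so it is available to you); the paper's route is marginally more compact because the single limsup estimate handles both tasks at once. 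Both arguments are complete and rigorous.
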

\begin{proof} 
Since $\zer(A_1+A_2+B)$ is nonempty closed convex by Lemma \ref{bound}(i) then there exists a metric (orthogonal) projection of $x^0$ onto $\zer(A_1+A_2+B)$, i.e. $\bar{x}=\Pi_{\zer(A_1+A_2+B)}(x^0).$ Since $\bar{x}\in \zer(A_1+A_2+B)\subseteq \Tau_k\cap \Gamma_k$, and by using the projection definition of $x^{k+1}$ onto the intersection of the hyperplanes $\Tau_k\cap \Gamma_k$ as in \eqref{iterado3}, we then have the property 
\begin{equation}\label{bound-barx}\|x^k-x^0\|\leq\|\bar{x}-x^0\|\quad\text{for all}\quad k\in\NN.\end{equation} Since $(x^k)_{k\in\NN}$ is bounded by Lemma \ref{seq-bdd}, and every weak accumulation point of the sequence $(x^k)_{k\in\NN}$ is in the set of optimal solutions $\zer(A_1+A_2+B)$ by Lemma \ref{all-weak-accum}, then any subsequence $(x^{i_k})_{k\in\NN}$ of $(x^k)_{k\in\NN}$ must converge weakly to an accumulation point belonging to $\zer(A_1+A_2+B)$ (say $x^{i_k}\rightharpoonup\hat{x}\in \zer(A_1+A_2+B)$). Thus, 
\begin{align*}
\|x^{i_k}-\bar{x}\|^2&=\|x^{i_k}-x^0-(\bar{x}-x^0)\|^2 \\ &=\|x^{i_k}-x^0\|^2+\|\bar{x}-x^0\|^2-2\la x^{i_k}-x^0,\bar{x}-x^0\ra \\&\leq 2\|\bar{x}-x^0\|^2-2\la x^{i_k}-x^0,\bar{x}-x^0\ra,
\end{align*} using \eqref{bound-barx} in the last inequality.
Since the subsequence $(x^{i_k})_{k\in\NN}$ is weakly convergent to $\hat{x}$, we get
\begin{align*}\label{limsupwklyconv}
\lim\sup_{k\to\infty}\|x^{i_k}-\bar{x}\|^2&\leq 2\|\bar{x}-x^0\|^2-2\la\hat{x}-x^0,\bar{x}-x^0\ra\\&=2\la\bar{x}-\hat{x},\bar{x}-x^0\ra+2\la\hat{x}-x^0,\bar{x}-x^0\ra-2\la\hat{x}-x^0,\bar{x}-x^0\ra\\&=2\la\bar{x}-\hat{x},\bar{x}-x^0\ra\\&\leq 0,
\end{align*} where the last inequality follows from the definition of $\bar x$ and Proposition \ref{proj}(ii). This implies that $x^{i_k}\to\bar{x}.$ Hence, we have proved that every weakly convergent subsequence of the generated sequence $(x^k)_{k\in\NN}$ converges strongly to $\bar{x}\in\zer(A_1+A_2+B).$ Hence, the whole sequence $(x^k)_{k\in\NN}$ converges strongly to $\bar{x}=\Pi_{\zer(A_1+A_2+B)}(x^0)$ proving the desired result.
\end{proof}
\section{Concluding Remarks}\label{section5}
This paper dealt with the weak and strong convergence of a modification of the forward-backward-forward (FBF) splitting method for solving monotone inclusions. We propose a conceptual algorithm modifying the forward-backward-half-forward (FBHF) splitting algorithm for finding a zero to the sum of three monotone operators. The two proposed modified variants use a backtracking strategy to find a suitable separating hyperplane, which splits the space into two halfspaces, one containing the optimal solution set and one containing the current iterate. Two explicit projection steps onto suitable halfspaces produce two methods with desired features. It is worth emphasizing that one of the variants is strongly
convergent to the best approximation solution, and both proposed methods produce very general iterations recovering the FBHF splitting method. 

We finalize by mentioning that replacing the Euclidean norm in the projection (forward) steps by a more general norm or Bregman distance may produce variants of the proposed iterations with a certain interest in some applications. Therefore, extending the presented analysis for more general projection steps
could be a promising subject for future research.

\medskip

\noindent{\bf Acknowledgments:} YBC was partially supported by the National Science Foundation (NSF), Grant DMS - 1816449. 

\medskip 

\noindent {\bf Data Availability Statement:} Data sharing not applicable to this article as no datasets were generated or analyzed during the current study.

\bibliographystyle{plain}
\small

\end{document}